\newtheorem{theorem}{Theorem}[section]
\newtheorem{lemma}[theorem]{Lemma}
\newtheorem{definition}[theorem]{Definition}
\newtheorem{rem}[theorem]{Remark}
\newtheorem{prop}[theorem]{Proposition}
\newcommand{\Label}[1]{\label{#1}}
\newcommand\RR{\mathbb{R}}
\newcommand\CC{\mathbb{C}}
\newcommand{\cE}{\mathcal{E}}
\newcommand{\cF}{\mathcal{F}}
\newcommand{\cD}{\mathcal{D}}
\newcommand{\ZZ}{\mathbb{Z}}
\newcommand{\mm}{\mathfrak{m}}
\newcommand{\dd}{\partial}
\newcommand{\al}{\alpha}
\newcommand{\si}{\sigma}
\newcommand{\be}{\beta}
\newcommand{\ga}{\gamma}
\newcommand{\OO}{\Omega}
\newcommand{\pr}{\prime}
\newcommand{\la}{\lambda}
\newcommand{\fA}{\mathfrak A}
\newcommand{\Qp}{\mathbb Q_p}
\newcommand{\Qpn}{\mathbb Q_p^n}
\newcommand{\vph}{\varphi}
\newcommand{\Aw}{A_w(\xi)}
\newcommand{\Fx}{\cF_{x\to \xi}}
\newcommand{\Fi}{\cF^{-1}_{\xi\to x}}
\newcommand{\Fiy}{\cF^{-1}_{\xi\to y}}
\newcommand{\w}[1]{\widetilde{#1}}
\newcommand{\are}{\varkappa}
\newcommand{\col}{\colon}
\newcommand{\vep}{\varepsilon}
\newcommand{\EEE}{\mathbf E}
\newcommand{\bfi}[1]{\textbf{\textit{#1}}}
\title{Multidimensional nonlinear pseudo-differential evolution equation with $p$-adic spatial variables}
\author{\textbf{Alexandra V. Antoniouk}\\
\footnotesize Institute of Mathematics,\\
\footnotesize National Academy of Sciences of Ukraine,\\
\footnotesize Tereshchenkivska 3, Kiev, 01004 Ukraine,\\
\footnotesize E-mail: antoniouk.a@gmail.com
\and
\textbf{Andrei Yu. Khrennikov}\\
\footnotesize International Center for Mathematical Modeling\\
\footnotesize in Physics and Cognitive Sciences,\\
\footnotesize Mathematical Institute, Linnaeus University,\\
\footnotesize V\"axj\"o, SE-35195 Sweden,\\
\footnotesize E-mail: andrei.khrennikov@lnu.se
\and
\textbf{Anatoly N. Kochubei}\\
\footnotesize Institute of Mathematics,\\
\footnotesize National Academy of Sciences of Ukraine,\\
\footnotesize Tereshchenkivska 3, Kiev, 01004 Ukraine,\\
\footnotesize E-mail: kochubei@imath.kiev.ua }
\numberwithin{equation}{section}
\begin{document}
\maketitle
\bigskip
\begin{abstract}
We study the Cauchy problem for $p$-adic non-linear evolutionary pseudo-differen\-tial
equations for complex-valued functions of a real positive time variable and $p$-adic
spatial variables. Among the equations under consideration there is the $p$-adic analog of the porous medium equation (or more generally, the nonlinear filtration equation) which arise in numerous application in mathematical physics and mathematical biology. 
Our approach is based on the construction of a linear Markov semigroup on a $p$-adic ball and the proof of m-accretivity of the appropriate nonlinear operator. The latter result is equivalent to the existence and uniqueness of a mild solution of the Cauchy problem of a nonlinear equation of the porous medium type.
\end{abstract}

\vspace{2cm}
{\bf Key words: }\ $p$-adic numbers; porous medium equation; Markov process; m-accretive operator

\medskip
{\bf MSC 2010}. Primary: 35S10; 47J35. Secondary: 11S80; 60J25; 76S05.

\newpage

\section{Introduction and Preliminaries}

Over a period of several hundred years theoretical physics has been developed on the basis of real and complex analysis.
However, for the past half a century the field of p-adic numbers $\Qp$ (as well as its algebraic extensions) has been intensively used in theoretical and mathematical physics (see \cite{VVZ,Khrennikov:1994,Khrennikov:1997,Ko:2001,Koz:2008} and
the references therein).
Since in $p$-adic analysis associated with the mappings from $\Qp$ to $\CC$, the operation of differentiation is not defined, many $p$-adic models instead of differential equations use pseudo-differential equations generated by so called Vladimirov operator $D^\al$.

A wide class of $p$-adic pseudo-differential equations was intensively used in applications, for example to model
basin-to-basin kinetics \cite{Avetisov:1999,Avetisov:2002,Koz:2004,Koz:2008}, for instance to construct the simplest $p$-adic pseudo-differential heat type equation \cite{Avetisov:2002}. In \cite{K:2008A} and in \cite{FZ:2006} ultrametric and $p$-adic nonlinear equations were used to model turbulence.

Some types of p-adic pseudo-differential equations were  stu\-di\-ed in detail in the books \cite{Ko:2001,AKK:book} and \cite{ZG:2016}; see also the recent survey \cite{BGPW}.

At the same time very little is known about nonlinear p-adic equations. We can mention only some semilinear evolution equations solved using p-adic wavelets \cite{AKK:book} and a kind of equations of reaction-diffusion type studied in \cite{ZG:2018}. A nonlinear evolution equation for complex-valued functions
of a real positive time variable and a $p$-adic spatial variable, which is a non-Archimedean counterpart of the fractional porous medium equation,  that is the equation:
\begin{equation}\Label{1-1}\
\dfrac{\dd u}{\dd t}+D^\al \big(\vph(u)\big)=0,\ \ u=u(t,x),\ \ t>0,\ x\in\Qp
\end{equation}
was studied in the recent paper \cite{Ko:2018}. Here $\Qp$ is the field of $p$-adic numbers, $D^\al$, $\al >0$ is Vladimirov's fractional differentiation operator, $\vph$ is a strictly monotone increasing continuous function.
Developing an $L^1$-theory of Vladimirov's $p$-adic fractional differentiation operator, the authors of \cite{Ko:2018} proved the
$m$-accretivity of the corresponding nonlinear operator and obtained the existence and uniqueness of a mild solution.

In this paper we prove a similar result for a more complicated multi-dimensional nonlinear equation resembling \eqref{1-1} but with more general nonlocal operator $W$ \eqref{W1} instead of the Vladimirov operator $D^\al$. We follow essentially the strategy of the paper \cite{Ko:2018} and use the abstract theory for nonlinear $m$-accretive operators in Banach space $L^1$ developed in \cite{BrSt:1973} and further in \cite{CrP}.

In order to use this method we need to build the $L^1$-theory for the weighted Vladimirov operator $W$ and prove some special properties of its restriction onto $p$-adic ball. This meets several difficulties, which we overcome by reconstruction of the associated Markov process in the $p$-adic ball and by recovering of the associated Levy measure. We prove several additional properties of $W$, which are necessary for our tasks and may have independent interest.

Let us formulate the main definitions and auxiliary statement, which will be used further.

\medskip

{\it 2.1. $p$-Adic numbers.} Let $p$ be a prime
number. The field of $p$-adic numbers is the completion $\mathbb Q_p$ of the field $\mathbb Q$
of rational numbers, with respect to the absolute value $|x|_p$
defined by setting $|0|_p=0$,
$$
|x|_p=p^{-\nu }\ \mbox{if }x=p^\nu \frac{m}n,
$$
where $\nu ,m,n\in \mathbb Z$, and $m,n$ are prime to $p$. $\Qp$ is a locally compact topological field.

Note that by Ostrowski's theorem there are no absolute values on $\mathbb Q$, which are not equivalent to the ``Euclidean'' one,
or one of $|\cdot |_p$.

The absolute value $|x|_p$, $x\in \mathbb Q_p$, has the following properties:
\begin{gather*}
|x|_p=0\ \mbox{if and only if }x=0;\\
|xy|_p=|x|_p\cdot |y|_p;\\
|x+y|_p\le \max (|x|_p,|y|_p).
\end{gather*}

The latter property called the ultra-metric inequality (or the non-Archi\-me\-dean property) implies the total disconnectedness of $\Qp$ in the topology
determined by the metric $|x-y|_p$, as well as many unusual geometric properties. Note also the following consequence of the ultra-metric inequality:
\begin{equation*}
|x+y|_p=\max (|x|_p,|y|_p)\quad \mbox{if }|x|_p\ne |y|_p.
\end{equation*}

The absolute value $|x|_p$ takes the discrete set of non-zero
values $p^N$, $N\in \mathbb Z$.
If $|x|_p=p^N$, then $x$ admits a
(unique) canonical representation
\begin{equation}
\Label{2.1}\
x=p^{-N}\left( x_0+x_1p+x_2p^2+\cdots \right) ,
\end{equation}
where $x_0,x_1,x_2,\ldots \in \{ 0,1,\ldots ,p-1\}$, $x_0\ne 0$.
The series converges in the topology of $\mathbb Q_p$. For
example,
$$
-1=(p-1)+(p-1)p+(p-1)p^2+\cdots ,\quad |-1|_p=1.
$$

The {\it fractional part} of element $x\in\Qp$ in canonical representation \eqref{2.1} is given by:
\[
\{x\}_p=
\left\{\begin{array}{ll} 0,&\text{if}\quad N\leq 0 \ \ \ \text{or}\ \ \ x=0;\\
p^{-N}\big(x_0+x_1p+\ldots+x_{N-1}p^{N-1}\big),&\text{if} \quad N>0.
\end{array}
\right.
\]

\medskip
{\it 2.2. Integration in the space $\Qp^n$}. The space $\Qpn=\Qp\times\cdots\times\Qp$ consists of points $x=(x_1,\ldots, x_n)$, where $x_j\in\Qp$, $j=1,\ldots,n$, $n\geq 2$. The $p$-adic norm on $\Qpn$ is
$$
\Vert x\Vert_p=\max\limits_{j=1,\ldots,n}|x_j|_p,\ \ \ \ x\in\Qpn.
$$
This norm is also non-Archimedean, since for any $x, y\in \Qpn$:
\begin{align*}
\Vert x+y\Vert_p=&\max\limits_{j=1,\ldots,n}|x_j+y_j|_p\leq \max\limits_{j=1,\ldots,n}\max\big(
|x_j|_p+|y_j|_p\big)=\\
&=\max\big(\max\limits_{j=1,\ldots,n}|x_j|_p,\max\limits_{j=1,\ldots,n}|y_j|_p\big)=\max\big(\Vert x\Vert_p,\Vert y\Vert_p\big).
\end{align*}
The space $\Qpn$ is a complete metric locally compact and totally disconnected space. The scalar product of vectors $x, y\in\Qpn$ is defined by $
x\cdot y=\sum\limits_{j=1}^n x_jy_j.
$
In the space $\Qpn$ the following change of variables formula is valid (see \cite[Ch.I, \S 4, Sect. 4, p. 68]{VVZ}). If $F: x=x(y)$ (i.e. $x_i =x_i(y_1,\ldots,y_n), i=1,\ldots,n$) is a homeomorphic map of the open compact set $K$ onto the (open) compact set $F(K)$, moreover the functions $x_i(y), i=1,\ldots,n$ are analytic in $K$ and
$
\det F^\prime (y)=\det \Big[{\dd x_j}/{\dd y_j}\Big] (y)\neq 0,$ $y\in K$,
then for any $f\in L^1(T(K))$:
\begin{equation}\Label{change}\
\int_{F(K)} f(x)\, d^nx =\int_K  f(F(y))\,\Vert\det F^\prime \Vert_p\, d^ny.
\end{equation}

{\it 2.3. Fourier transformation and generalized functions on $\Qp^n$}. The function $\chi(x)=exp\big(2\pi i\{x\}_p\big)$ is an additive character of the field $\Qp$, i.e. it is a character of its additive group, the continuous complex valued function on $\Qp$ satisfying the conditions:
\begin{equation}\Label{char}\
\vert \chi (x)\vert = 1, \quad\quad \chi(x+y)=\chi(x)+\chi(y).
\end{equation}
 Moreover $\chi(x)=1$ if and only if $| x|_p\leq 1.$ Denote by $dx$ the Haar measure on the additive group $\Qp$. Then the integral on the $p$-adic ball
 $B_N=\big\{x\in\Qpn: \Vert x\Vert_p\leq p^N\big\},\quad N\in\ZZ,$
equals (\cite[(7.14), p. 25]{VTab}):
\begin{equation}\Label{formula11}\
\int_{B_N}\chi(\xi\cdot x)\,d^n x=\left\{
\begin{array}{lcl}
p^{nN},&\text{if} &\Vert \xi\Vert_p\leq p^{-N};\\
0,&\text{if} &\Vert \xi\Vert_p> p^{-N},
\end{array}
\right.\end{equation}
Lemma 4.1 in \cite[Ch.III, \S 4, p. 137]{Taib} implies that for $z_0$ such that $\Vert z_0\Vert_p=1$:
\begin{equation}\Label{chi-1}\
\int_{S_j}\chi(z_0\cdot x)\,d^nx=\left\{
\begin{array}{rcl}
p^{jn}(1-p^{-n}),&\ &{j\leq}\  0;\\
-p^{\,{n}}p^{-n},&\ &{j=1};\\
0,&\ &{j>1},
\end{array}
\right.
\end{equation}
where $S_j=\big\{x\in\Qpn: \Vert x\Vert_p= p^{\,j}\big\},\quad j\in\ZZ$.

The {\it Fourier transform} of a test function $\vph\in\cD (\Qp^n)$  is defined by the formula
\[
\widehat{\vph}(\xi)\equiv\big(\Fx\,\vph\big)(\xi)=\int_{\Qp^n}\chi(\xi\cdot x)\vph(x)d^nx,\quad \xi \in \Qp^n,
\]
where $\chi (\xi\cdot x)= \chi(\xi_1 x_1)\cdots\chi(\xi_n x_n)=\exp(2\pi i\sum_{j=1}^n\{\xi_j x_j\}_p)$.

Remark that the additive group of $\Qp$ is self-dual, so that the Fourier transform of a complex-valued function $\vph\in \Qp^n$ is again a function on $\Qpn$ and if $\Fx\vph\in L^1(\Qpn)$ then it is true the inversion formula $\vph(x)=\int_{\Qpn}\chi(-x\cdot \xi)\widehat{\vph}(\xi)\,d^n\xi.$
Let us also remark that, in contrast to the Archimedean situation,  the Fourier transform
$\vph \to \Fx \vph$
is a linear and continuous automorphism of the space $\cD (\Qpn)$ (cf. \cite[Lemma 4.8.2]{AKK:book}, see also \cite[Ch. II,§2.4.]{Gel}, \cite[III,(3.2)]{Taib}, \cite[VII.2.]{VVZ}, i.e.
$
\vph(x)=\cF^{\,-1}_{\xi\to x}\Big(\cF_{x\to\xi} \vph \Big).
$
Here $\cD (\Qpn)$
denotes the vector space of {\it test functions}, i.e. of all locally constant functions with compact support.
 Recall that a function $\vph: \Qpn\to \CC$ is {\it locally constant} if there exist such an integer $\ell \geq 0$ that for any $x\in\Qpn$
\[\vph (x+y)=\vph (x),\quad \text{if}\quad \Vert y\Vert_p\leq p^{-\ell},\quad \text{($\ell$ is independent on $x$).}\]

The smallest number $\ell$ with this property is called {\it the exponent of constancy of the function $\vph$.}  Note that $\cD(\Qpn)$ is dense in $L^q(\Qpn)$ for each $q\in [1,\infty)$.

Let us also introduce the subspace $D_N^\ell\subset \cD(\Qpn)$ consisting of functions with supports in a ball $B_N$ and with the exponents of local constancy less than $\ell$. Then the topology in $\cD(\Qpn)$ is defined as the double inductive limit topology, so that
\[\cD(\Qpn)=\lim\limits_{\longrightarrow\atop{N\to\infty}}
\lim\limits_{\longrightarrow\atop{\ell\to\infty}}D_N^\ell.\]

If $V\subset \Qpn$ is an open set, the space $\cD(V)$ of test functions on $V$ is defined as a subspace of $\cD(\Qpn)$ consisting of functions with supports in $V$. For a ball $V=B_N$, we can identify $\cD (B_N)$ with the set of all locally constant functions on $B_N$.

The space $\cD^\pr(\Qpn)$ of Bruhat-Schwartz distributions on $\Qpn$ is defined as a strong conjugate space to $\cD(\Qpn)$. By duality, the Fourier transform is extended to a linear (and therefore continuous) automorphism of $\cD^\pr(\Qpn)$. For a detailed theory of convolutions and direct product of distributions on $\Qpn$ closely connected with the theory of their Fourier transforms see \cite{AKK:book,Ko:2001,VVZ}

\medskip
{\it 2.4. Radial operator on $\Qp^n$}. In this paper we consider the class of non-local operators introduced in \cite{ZG:2016}.

\begin{definition}\rm
Let us fix a function $w:\Qpn\to \RR_+,$
which satisfies the following properties:
\begin{itemize}
\item[(i)] $w$ is radial, i.e. depending on $\Vert y\Vert_p$, $w=w\big(\Vert y\Vert_p\big)$, continuous and increasing function of $\Vert y\Vert_p$;
\item[(ii)] $w(0)=0$, if $y=0$;
\item[(iii)] there exist such constants $C>0$  and $\al > n$ that
\begin{equation}\Label{w2}\
C_1\Vert \xi\Vert_p^\al\leq w(\Vert \xi\Vert_p)\leq C_2\Vert \xi\Vert_p^\al, \ \text{for any}\ \ \xi\in \Qpn.
\end{equation}

\end{itemize}
\end{definition}

Remark that condition (iii) implies that for some $M\in \ZZ$
\[\int\limits_{\Vert y\Vert_p\geq p^M}\dfrac{d^ny}{w\big(\Vert y\Vert_p\big)}<\infty.\]

The nonlocal operator $W$ is defined by
\begin{equation}\Label{W1}\
(W\vph)(x)=\varkappa\int_{\Qpn}\dfrac{\vph(x-y)-\vph(x)}{w(\Vert y\Vert_p)}\,d^ny,\ \ \text{for}\ \ \vph\in\cD(\Qpn),
\end{equation}
where $\are$ is some positive constant.
From \cite[(2.5), p.15]{ZG:2016} it follows that
for $\vph\in\cD(\Qpn)$ and some constant $M=M(\vph)$ operator $W$ has the following representation:
\begin{equation}\Label{W2}\
(W\vph)(x)=\varkappa\,\dfrac{1_{\Qpn\backslash B_M}}{w(\Vert x\Vert_p)}\ast  \vph(x)\, -\, \vph(x) \int\limits_{\Vert y\Vert_p\, >p^M}\dfrac{d^n y}{w(\Vert y\Vert_p)}.
\end{equation}

Moreover from Lemma 4 and Proposition 7, Ch.2 in \cite{ZG:2016} it follows that this operator, acting from $\cD(\Qpn)$ to $L^q(\Qpn)$, is linear bounded operator for each $q\in [1,\infty)$ and has the representation:
\begin{equation}\Label{FW}\
(W\vph)(x)=-\are \Fi\,\big(\Aw\,\Fx\,\vph\big),\ \text{for}\ \vph\in\cD(\Qpn),
\end{equation}
where
\begin{equation}\Label{Aw}\
\Aw:= \int_{\Qpn}\dfrac{1-\chi(y\cdot\xi)}{w(\Vert y\Vert_p)}\, d^ny.
\end{equation}

From \cite[(2.9), p.16]{ZG:2016} for any $z\in\Qpn$ and $\Vert z\Vert_p=p^{-\ga}$ it follows that
\begin{align}\Label{Awfin}\
A_w(z)=&(1-p^{-n})\sum\limits_{j=2}^{\infty}\dfrac{p^{\ga n+jn}}{w(p^{\ga+j})}\,
+ \dfrac{p^{\ga n+n}}{w(p^{\ga+1})}=\\
\Label{eq:Awrep-1}\
&=(1-p^{-n})\sum\limits_{j=\ga+2}^{\infty}\dfrac{p^{nj}}{w(p^{\,j})}\,
+ \dfrac{p^{n(\ga +1)}}{w(p^{\ga +1})}.
\end{align}

The condition \eqref{w2}
on function $w$ implies that there exist positive constants $C_3$ and $C_4$ such that
\begin{equation}\Label{Aw1}\
C_3\Vert\xi\Vert_p^{\al-n}\leq \Aw \leq C_4\Vert \xi\Vert_p^{\al-n},
\end{equation}
(see Lemma 8, Ch.2 in \cite{ZG:2016}).

Remark also that $W\vph\in C(\Qpn)\cap L^q(\Qpn)$ for each $q\in [1,\infty)$ and $\vph\in \cD(\Qpn)$ and operator $W$ may be extended to a densely defined operator in $L^2(\Qpn)$ with the domain
\[Dom (W)=\big\{\vph\in L^2(\Qpn): \ \Aw\Fx\,\vph\in L^2(\Qpn)\big\}.\]
The operator $\big(-W, Dom (W)\big)$ is essentially self-adjoint and positive. In particular, it generates a $C_0$-semigroup of contractions $T(t)$ in the space $L^2(\Qpn)$ (Proposition 20 in \cite{ZG:2016}):
\begin{equation}\Label{Tt}\
T(t)u=Z_t\ast u=\int_{\Qpn}Z(t,x-y)u(y)\,d^ny,\ \ t >0; \ T(0)u=u.
\end{equation}
Here $Z_t(x)=Z(t,x)$
\begin{equation}\Label{Zt}\
Z(t,x)=\int_{\Qpn}e^{-\are t\Aw}\chi(-x\cdot \xi)\,d^n\xi,\ \ \text{for}\ \ t>0.
\end{equation}
is the heat kernel or fundamental solution of the corresponding Cauchy problem.
Later we need the following properties of the fundamental solution $Z(t,x)$  (Lemma 10 and 11 and Theorem 13, Ch. 2 in \cite{ZG:2016}):
\begin{align}\Label{Z0}\
1) &Z(t,x)\geq 0; \quad Z_t(x)\in L^1(\Qpn),\  \text{for}\  t>0\\
\Label{Zt5}\
2) &\int_{\Qpn}Z(t,x)\,d^nx =1;\\
\Label{Zt4}\
3) & Z(t+s,x)=\int_{\Qpn} Z(t,x-y)\,Z(s,y)\, d^ny,\ \ t, s >0, \ x\in\Qpn;\\
 \Label{Zt1}\
4)&
Z(t,x)=\Fi\,\big[e^{-\are t\Aw}\big]\in C(\Qpn;\RR)\cap L^1(\Qpn)\cap L^2(\Qpn);\\
\Label{Zt3}\
5) &  Z(t,x) \leq \max\{2^\al C_1, 2^\al C_2\}\, t
\Big(\Vert x\Vert_p+
t^{\frac{1}{\al - n}}\Big)^{-\al},\ \ \text{for}\ \ t>0\ \ \text{and}\ \ x\in\Qpn;\\
\Label{Zt6}\
6)& D_tZ(t,x)=-\varkappa \int_{\Qpn}A_w(\xi)\,e^{-\varkappa t A_w(\xi)}\chi(x\cdot \xi) \, d^n \xi,\ \ \text{for}\ \ t>0,\ x\in \Qpn;\\
\Label{Zt7}\
7)& Z(t,x) =\Vert x\Vert_p^{-n}\Bigg[(1-p^{-n})\sum\limits_{j=0}^\infty p^{-nj}e^{-\are t A_w(p^{-(\be+j)})}-e^{-\are tA_w(p^{-\be +1})}\Bigg], \ \text{if}\ \ \Vert x\Vert_p=p^\be.
\end{align}

Since $Z_t(x)\in L^1(\Qpn)$ for $t>0$, for $u\in\cD(\Qpn)\subset L^\infty(\Qpn)$ the convolution in \eqref{Tt} exists and is a well-defined continuous in $x$ function (See Th, 1.11).

\medskip
{\it 2.5. Associated Markov processes}.\  \Label{Sec2-4}\  We use the analytic definition of a Markov process, that is a definition of a transition probability. (See for example \cite{Dyn}.) Suppose that $(E,\mathcal E)$ is a measurable space. A family of real-valued non-negative measurable with respect to variable $x$ functions $P(s,x;t,\Gamma)$, $s<t$, $x\in E,\Gamma \in \mathcal E$, such that $P(s,x;t,\cdot )$ is a measure on $\mathcal E$ and $P(s,x;t,\Gamma )\le 1$, is called a {\it transition probability}, if the Kolmogorov-Chapman equality
$$
\int\limits_EP(t,y;\tau ,\Gamma )\, P(s,x;t,dy)=P(s,x;\tau,\Gamma)
$$
holds whenever $s<t<\tau$, $x\in E$, $\Gamma \in \mathcal E$.

We consider $\cE=\Big(\Qpn,\Vert \cdot \Vert_p\Big)$ as a complete non-Archimedean metric space. Let $\cE$ denote the Borel $\si$-algebra   In Chapter 2.2.5 (Lemma 14, p. 22) of \cite{ZG:2016} is was shown that the transition probability
\[P(t,x,B)=\left\{\begin{array}{ll}
\int\limits_{B} p\,(t,x,y)\,d^n y,&\ \text{for} \ t >0;\ x,y\in \Qpn,\ B\in\cE\\
1_B(x),& \ \text{for} \ t=0,
\end{array}\right.
\]
with $p(t,x,y):=Z(t,x-y)$ and $Z(t,x)$ defined in \eqref{Zt},
is normal, i.e. $
\lim\limits_{t\downarrow s}P(s,x;t,\Qpn)=1
$, for any $s>0$, $x\in \Qpn$.

Due to Theorem 16 in \cite[\S 2.2.5, p.23]{ZG:2016} as the consequence of Theorem 3.6 in \cite[p. 135]{Dyn} $Z(t,x)$ is the transition density of a time and space homogeneous Markov process $\xi_t$ which is bounded, right-continuous and has no discontinuities other than jumps.
 Moreover the associated semigroup \eqref{Tt}
is Feller one.
Moreover, due to \cite[Vol. II, Ch1, \S 1, p.46] {GS} this process has an independent increments as  a  space and time homogeneous Markov process. (See also \cite[Vol. I, Ch. III, \S 1, p. 188]{GS})

\section{Properties of the operator W in Banach space $L^1(\Qpn)$}

Let us consider operator $T(t)$ defined by \eqref{Tt}:
\[\big(T(t)u\big)(x)=\int_{\Qpn} Z(t,x-\xi)\,u (\xi)\, d\xi\]
in Banach space $L^1(\Qpn)$. From \eqref{Zt4}, \eqref{Zt3} and Young inequality it follows that $T(t)$ is a contraction semigroup in $L^1(\Qpn)$.

\begin{lemma} \Label{l3-1}\ $T(t)$ is a strongly continuous semigroup in $L^1(\Qpn)$.
\end{lemma}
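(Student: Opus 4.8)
The goal is to show that $T(t)$ is strongly continuous on $L^1(\Qpn)$, i.e. $\|T(t)u - u\|_{L^1}\to 0$ as $t\downarrow 0$ for every $u\in L^1(\Qpn)$. Since we already know $T(t)$ is a contraction semigroup on $L^1$, the standard reduction applies: it suffices to verify strong continuity on the dense subspace $\cD(\Qpn)\subset L^1(\Qpn)$, and then extend by a routine $\varepsilon/3$-argument using the uniform contraction bound $\|T(t)\|\le 1$. The plan is therefore to prove $\|T(t)\vph-\vph\|_{L^1}\to 0$ for $\vph\in\cD(\Qpn)$ and invoke density.

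For $\vph\in\cD(\Qpn)$, I would write the difference explicitly using the convolution representation \eqref{Tt} together with the normalization $\int_{\Qpn}Z(t,y)\,d^ny=1$ from \eqref{Zt5}. This gives
\begin{equation*}
\big(T(t)\vph\big)(x)-\vph(x)=\int_{\Qpn}Z(t,y)\,\big[\vph(x-y)-\vph(x)\big]\,d^ny,
\end{equation*}
and hence, by Fubini (justified since $Z_t\in L^1$ by \eqref{Z0} and $\vph$ is bounded with compact support),
\begin{equation*}
\|T(t)\vph-\vph\|_{L^1}\le \int_{\Qpn}Z(t,y)\,\Big(\int_{\Qpn}\big|\vph(x-y)-\vph(x)\big|\,d^nx\Big)\,d^ny.
\end{equation*}
Denoting the inner integral $\omega(y):=\|\tau_y\vph-\vph\|_{L^1}$, where $\tau_y$ is translation by $y$, the estimate becomes $\|T(t)\vph-\vph\|_{L^1}\le \int_{\Qpn}Z(t,y)\,\omega(y)\,d^ny$. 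The key point is that $\omega$ is bounded by $2\|\vph\|_{L^1}$, and, because $\vph$ is locally constant with some exponent of constancy $\ell$, one has $\omega(y)=0$ whenever $\|y\|_p\le p^{-\ell}$. Thus the mass of the integral is concentrated on the region $\|y\|_p>p^{-\ell}$, and I must show that $Z(t,\cdot)$ places vanishing mass there as $t\downarrow 0$.

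This concentration estimate is the main obstacle, and I would handle it using the upper bound \eqref{Zt3}, namely $Z(t,y)\le \max\{2^\al C_1,2^\al C_2\}\,t\,\big(\|y\|_p+t^{1/(\al-n)}\big)^{-\al}$. Splitting $\int_{\Qpn}Z(t,y)\,\omega(y)\,d^ny\le 2\|\vph\|_{L^1}\int_{\|y\|_p>p^{-\ell}}Z(t,y)\,d^ny$ and feeding in \eqref{Zt3}, one estimates
\begin{equation*}
\int_{\|y\|_p>p^{-\ell}}Z(t,y)\,d^ny\le \max\{2^\al C_1,2^\al C_2\}\,t\int_{\|y\|_p>p^{-\ell}}\|y\|_p^{-\al}\,d^ny,
\end{equation*}
where the remaining integral converges precisely because $\al>n$ (condition (iii) on $w$) and hence equals a finite constant depending only on $p,n,\al,\ell$. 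The right-hand side is thus bounded by $C(\vph)\,t\to 0$ as $t\downarrow 0$, which completes the argument for $\vph\in\cD(\Qpn)$ and, after the density extension, proves the lemma. I expect the only subtlety to be the careful bookkeeping of the tail integral over $\Qpn$ and checking that the decay rate $\al$ exceeds the dimension $n$, which is exactly what guarantees integrability of $\|y\|_p^{-\al}$ away from the origin.
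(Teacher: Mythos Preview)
Your proof is correct and follows essentially the same approach as the paper: reduce to $\cD(\Qpn)$ by density and the contraction bound, use the normalization \eqref{Zt5} to write the difference as an integral against $Z(t,\cdot)$, exploit local constancy to kill the small-$y$ contribution, and use the heat-kernel upper bound \eqref{Zt3} together with $\al>n$ to show the remaining tail is $O(t)$. The only cosmetic difference is that you work directly with the translation modulus $\omega(y)=\|\tau_y\vph-\vph\|_{L^1}$, whereas the paper splits the double integral according to whether $x$ lies inside or outside the support of $u$; your organization is slightly more economical but the ingredients are identical.
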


\begin{proof} Since the space $\cD(\Qpn)$ of Bruhat-Schwartz functions is dense in $L^1(\Qpn)$ \cite{VVZ} it is sufficient to prove the convergence of the expression $I_t:=\Vert T(t)u -u\Vert_{L^1(\Qpn)}\to 0,\ \ t\to 0$  only for $u\in\cD(\Qpn)$.

Due to \eqref{Zt5} we have for $u\in\cD(\Qpn)$:
\begin{align*}
&I_t:=\int_{\Qpn}\big\vert T(t)u(x)-u(x)\big\vert\,d^nx = \int_{\Qpn}
\Big\vert \int_{\Qpn} Z(t, x-\xi)u(\xi)\,d^n\xi -u(x)\Big\vert\,d^n x=\\
&=\int_{\Qpn}
\int_{\Qpn} Z(t,x-\xi)\,\big\vert u(\xi) -u(x)\big\vert\,d^n\xi\,d^n x.
\end{align*}
Here we also used \eqref{Z0}. Since function $u\in\cD(\Qpn)$ is locally constant, then such is also $u(\xi)-u(x)$ and therefore
it exists some $m>0$ such that $u(\xi)-u(x)=0$ for $\Vert x-\xi\Vert_p\leq p^m$. Moreover there exists some $N>m$ such that $u(x)=0$ for $\Vert x\Vert_p>p^N$.
Noting this let us represent integral $I_t$ as a sum of two integrals: inside and outside the $p$-adic ball.
Since on the ball $\Vert x-\xi\Vert_p\leq p^m$ functions $u(x)$ and $u(\xi)$ coincides, therefore, using \eqref{Zt3}, we have
\begin{align*}\nonumber
&I_{1,t}=\int\limits_{\Vert x\Vert_p\leq p^N}d^nx\int\limits_{\Vert x-\xi\Vert_p>p^m} Z(t,x-\xi)\,\vert u(\xi)-u(x)\vert\,d^n\xi{\leq}\\
\nonumber
&\leq C\,t \int\limits_{\Vert x\Vert_p\leq p^N}d^nx\int\limits_{\Vert x-\xi\Vert_p>p^m}
\Big(t^{\frac{1}{\al-n}}+\Vert x-\xi\Vert_p\Big)^{-\al}d^n\xi\leq\\
&\leq  C\,t \int\limits_{\Vert x\Vert_p\leq p^N}d^nx\int\limits_{\Vert z\Vert_p>p^m}\Vert z\Vert^{-\al}_pd^nz\to 0,\ \ t\to 0, \ \ \text{for}\ \  \al > n.
\end{align*}
Since outside the ball, for $\Vert x\Vert_p >p^N$, function $u$ vanishes, $u (x)=0$, we have
\begin{align*}
&I_{2,t}=\int\limits_{\Vert x\Vert_p> p^N}d^nx\int\limits_{\Qpn} Z(t,x-\xi)\,\vert u(\xi)-u(x)\vert\,d^n\xi=\int\limits_{\Vert x\Vert_p> p^N}d^nx\int\limits_{\Qpn} Z(t,x-\xi)\,\vert u(\xi)\vert\,d^n\xi=\\
&=\int\limits_{\Vert x\Vert_p> p^N}d^nx\int\limits_{\Vert \xi\Vert_p\leq p^N} Z(t,x-\xi)\,\vert u(\xi)\vert\,d^n\xi.
\end{align*}
On the last step we also used that support of function $u(\xi)$ is contained in the ball $\Vert \xi\Vert_p\leq p^N$. Finally, using \eqref{Zt3} we continue: 
\begin{align*}
&I_{2,t}\leq C\,t \int\limits_{\Vert x\Vert_p> p^N}d^nx\int\limits_{\Vert \xi\Vert_p\leq p^N}
\Big(t^{\frac{1}{\al-n}}+\Vert x- \xi\Vert_p\Big)^{-\al}d^n\xi=\\
&=C\,t \int\limits_{\Vert x\Vert_p> p^N}d^nx\int\limits_{\Vert \xi\Vert_p\leq p^N}
\Big(t^{\frac{1}{\al-n}}+\Vert x\Vert_p\Big)^{-\al}d^n\xi\to 0, \ \ \text{as}\ \  t\to 0,
\end{align*}
where we have used that $\Vert x-\xi\Vert_p=\Vert x\Vert_p$ for $\Vert x\Vert_p>p^N$.
\end{proof}

\begin{definition}\Label{def:3-2}\
Let us define operator $\mathfrak A$ as a generator of semigroup $T(t)$ in space $L^1(\Qpn)$ and let $Dom\,(\fA)$ be its domain.
\end{definition}

\begin{lemma}\Label{lem3-2} Any test function $u\in \cD(\Qpn)$ belongs to the domain of the operator $\fA$ in $L^1(\Qpn)\colon \cD(\Qpn) \subset Dom(\fA)$. Moreover, on the test functions the operator $\fA$ coincides with the representation of operator $W$ of the form \eqref{FW}.
\end{lemma}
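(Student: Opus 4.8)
The plan is to show that for $u\in\cD(\Qpn)$ the strong limit $\fA u=\lim_{t\downarrow0}t^{-1}\big(T(t)u-u\big)$ exists in $L^1(\Qpn)$ and coincides with $Wu$; since on $\cD(\Qpn)$ the operator $W$ is already given by \eqref{FW}, this yields both assertions at once. First I would pass from the semigroup form \eqref{Tt}, $T(t)u=Z_t\ast u$, to a jump-kernel form. Using the normalization $\int_{\Qpn}Z(t,y)\,d^ny=1$ from \eqref{Zt5} together with the positivity \eqref{Z0}, and the change of variable $y=x-\xi$, one gets the pointwise identity
\[
\frac{T(t)u(x)-u(x)}{t}=\int_{\Qpn}\frac{Z(t,y)}{t}\,\big[u(x-y)-u(x)\big]\,d^ny .
\]
Because $u$ is locally constant, there is an integer $M$ (the very constant appearing in \eqref{W2}) with $u(x-y)-u(x)=0$ for all $x$ whenever $\Vert y\Vert_p\le p^{M}$, so the integration is confined to $\Vert y\Vert_p>p^{M}$. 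Reading $W$ through its defining form \eqref{W1} and applying the same cut-off, $Wu(x)=\are\int_{\Vert y\Vert_p>p^{M}}w(\Vert y\Vert_p)^{-1}\big[u(x-y)-u(x)\big]\,d^ny$. Hence $t^{-1}(T(t)u-u)-Wu$ is the integral over $\{\Vert y\Vert_p>p^{M}\}$ of $\big[t^{-1}Z(t,y)-\are\,w(\Vert y\Vert_p)^{-1}\big]\big[u(x-y)-u(x)\big]$.

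Next I would take the $L^1(\Qpn)$-norm in $x$ and move it under the $y$-integral by Minkowski's integral inequality, estimating $\Vert u(\cdot-y)-u\Vert_{L^1}\le2\Vert u\Vert_{L^1}$. This decouples $u$ from the kernel and reduces the whole lemma to the scalar convergence
\[
\int_{\Vert y\Vert_p>p^{M}}\Big|\frac{Z(t,y)}{t}-\frac{\are}{w(\Vert y\Vert_p)}\Big|\,d^ny\longrightarrow0,\qquad t\downarrow0,
\]
which I would obtain by dominated convergence. For the majorant, the bound \eqref{Zt3} gives $t^{-1}Z(t,y)\le C\big(\Vert y\Vert_p+t^{1/(\al-n)}\big)^{-\al}\le C\Vert y\Vert_p^{-\al}$ uniformly in $t$, while \eqref{w2} gives $\are\,w(\Vert y\Vert_p)^{-1}\le C\Vert y\Vert_p^{-\al}$; the common majorant $C\Vert y\Vert_p^{-\al}$ is integrable on $\{\Vert y\Vert_p>p^{M}\}$ exactly because $\al>n$.

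The hard part is the pointwise limit $\lim_{t\downarrow0}t^{-1}Z(t,y)=\are\,w(\Vert y\Vert_p)^{-1}$ for $y\ne0$, that is, the identification of the L\'evy density of the process $\xi_t$ with the kernel of $W$. I would prove it from the explicit spherical representation \eqref{Zt7}: writing $\Vert y\Vert_p=p^{\be}$ and using $Z(0,y)=0$, the quotient $t^{-1}Z(t,y)$ becomes a single series each of whose exponentials contributes $t^{-1}(e^{-\are tA}-1)\to-\are A$. Since the relevant values of $A_w$ make the coefficients of this series decay geometrically, the differentiated series converges locally uniformly in $t\ge0$ for every $\al>n$, so differentiation term by term at $t=0$ is legitimate. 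It then remains to sum the resulting series; I expect it to telescope, via the recurrence for $A_w$ at consecutive radii that follows from \eqref{eq:Awrep-1}, to exactly $\are\,w(p^{\be})^{-1}$. Combined with the majorant this gives the scalar convergence, hence $t^{-1}(T(t)u-u)\to Wu$ in $L^1(\Qpn)$, proving $\cD(\Qpn)\subset Dom(\fA)$ and $\fA u=Wu$ for $u\in\cD(\Qpn)$.
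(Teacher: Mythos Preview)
Your argument is correct and takes a genuinely different route from the paper's proof of this lemma.

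The paper works entirely on the Fourier side: using \eqref{Tt} and \eqref{Zt1} it writes
\[
\frac{1}{t}\big(T(t)u-u\big)=\cF^{-1}_{\xi\to x}\Big[\tfrac{1}{t}\big(e^{-\are tA_w(\xi)}-1\big)\,\Fx u\Big],
\]
and then controls the difference with $-\are\,\cF^{-1}[A_w\,\Fx u]$ by the Taylor remainder $|t^{-1}(e^{-\are tA_w}-1)+\are A_w|\le C_N\,t\,A_w^2(\xi)$ on the (compact) support of $\Fx u$. The point is that $\Fx u\in\cD(\Qpn)$, so both the support and the exponent of local constancy of the integrand are fixed independently of $t$, and the $L^1(\Qpn)$-norm of the inverse transform is controlled by a fixed finite-volume factor times the $L^1$-norm on the Fourier side. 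No information about $Z(t,\cdot)$ beyond \eqref{Zt1} is needed.

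You instead stay on the kernel side: after the cut-off at $\Vert y\Vert_p>p^{M}$ and Minkowski, everything reduces to the scalar limit
\[
\lim_{t\downarrow 0}\frac{Z(t,y)}{t}=\frac{\are}{w(\Vert y\Vert_p)}\qquad(y\ne 0),
\]
under the integrable majorant $C\Vert y\Vert_p^{-\al}$ coming from \eqref{Zt3} and \eqref{w2}. Your justification of term-by-term differentiation of \eqref{Zt7} is sound: with $\Vert y\Vert_p=p^{\be}$ one has $A_w(p^{-(\be+j)})\le C_4\,p^{-(\be+j)(\al-n)}$ by \eqref{Aw1}, so the differentiated series is dominated by $C\sum_j p^{-j\al}<\infty$, uniformly in $t\ge0$. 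The ``telescoping'' you anticipate is exactly the computation the paper carries out later, in the proof of the theorem on $\fA_N$, leading to \eqref{Ztare}; it is not a one-line identity but a genuine cancellation (the paper splits it into four blocks $A,B,C,D$ and shows $A-C_1$ and $C_2-D$ cancel, leaving $B=p^{n\be}/w(p^{\be})$). So your proof is complete once that computation is imported.

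In short: the paper's Fourier argument is shorter here because it only needs compactness of $\operatorname{supp}\Fx u$; your kernel argument is more probabilistic---it identifies the L\'evy density of $\xi_t$ with $\are/w(\Vert y\Vert_p)$---and anticipates a calculation the paper needs anyway in Section~5. Either approach proves the lemma.
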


\begin{proof} Let us write for $u\in \cD(\Qpn)$:
\begin{align*}
&\dfrac{1}{t}\Big( T(t)u-u\Big)=\frac{1}{t}\big(Z_t\ast u-u\big)=\\
&=\dfrac{1}{t}\Big(\Fi\big[e^{-\are t \Aw}\big]\ast u - \Fi \Fx u\big] \Big)=\\
&=\dfrac{1}{t}\Big(\Fi \big[e^{-\are t \Aw}\big]\ast \Fi \Fx u -
\Fi \Fx u\big] \Big)=\\
&=\dfrac{1}{t}\Big(\Fi \big[e^{-\are t \Aw}\cdot \Fx \,u\big] -
\Fi \Fx u\big] \Big)=\\
&=\Fi \Big[ \dfrac{1}{t}\big(e^{-\are t \Aw}-1\big) \Fx \,u\Big].
\end{align*}
Taking into account these calculations, to finish the prove we need to show that
\[F_t:=\int_{\Qpn} \Big\vert \Fi \big[ \dfrac{1}{t}\big(e^{-\are t \Aw}-1\big)\Fx\, u +\are \Aw\Fx\,u\big]\Big\vert\,d^n\xi \to 0, t\to 0.\]
To see this it is sufficient to note that for $u\in\cD(\Qpn)$ such that the support of $\Fx \,u$ is contained in some ball $B_N$:
\begin{align*}
&\big\vert\dfrac{1}{t}\big(e^{-\are \,t\Aw}-1\big)+\are\Aw \big\vert\cdot\vert\Fx\, u\vert =\\
&= \Big\vert\dfrac{1}{t}
\Big(
\sum\limits_{q=0}^\infty (-1)^q\dfrac{\big(\are\,t \Aw\big)^q}{q!} -1\Big)+\are \Aw \Big\vert\cdot \vert \Fx\,  u\vert=\\
&= t\, A_w^2(\xi) \are^2\big\vert \sum\limits_{m=2}^\infty (-1)^m\dfrac{\are^m t^m A_w^m(\xi)}{(m+2)!}\big\vert\cdot\vert\Fx\,  u\vert\leq\\
&\leq C_N \, t\, A_w^2(\xi) \vert\Fx\,  u\vert
\end{align*}
since the series converges for any $\xi$ which belongs to the support of $u$.
Therefore
\begin{align*}F_t
& \leq \int\limits_{\Qpn}\Big\vert \Big( \dfrac{1}{t}\big(e^{-\are t \Aw}-1\big) +\are\Aw \Big)\Fx \,u\big\vert\, d^n\xi\leq\\
&\leq C_N\,t\int\limits_{\Vert \xi\Vert_p\leq\, p^{-N}}A^2_w(\xi)\,\vert\Fx u\vert \, d^n\xi \to 0, \ t\to 0.
\end{align*}
\end{proof}

\section{Construction of the Markov process in the ball}

Let $\xi_t$ be the Markov process on $\Qpn$ constructed in 2.4 of Section \ref{Sec2-4}. Like in \cite{Ko:2001} (Section 4.6.1) we construct the Markov process on a ball $B_N=\{x\in\Qpn\col \Vert x \Vert \leq p^N\}.
$
Suppose that $\xi_0\in B_N$. Denote by $\xi^{(N)}_t$ the sum of all jumps of the process
$\xi_\tau$, $\tau\in [0,t]$, whose absolute values exceed $p^N$. Since $\xi_t$ is right continuous process with left limits, $\xi_t^{(N)}$ is finite a.s. Moreover
$\xi_0^{(N)}=0$. Let us consider process
\begin{equation}\Label{etat}\
\eta_t=\xi_t-\xi_t^{(N)}.
\end{equation}
Since the jumps of $\eta_t$ never exceed $p^N$ by absolute value, this process remain a.s. in $B_N$ (due to the ultra-metric inequality).

Below we will identify a function on ball $B_N$ with its extension by zero
onto the whole $\Qpn$. Let $\cD(B_N)$ consist of functions from
$\cD(\Qpn)$ supported in the ball $B_N$. Then $\cD(B_N)$ is dense in
$L^2(B_N)$, and the operator $W_N$ in $L^2(B_N)$ defined by restricting
the operator $W$ \eqref{W1} to $\cD (B_N)$ and considering
$\big(Wu\big)(x)$ only for $x\in B_N$.
Let us find the generator $W_N$ of process $\eta_t$. To do this we need the following bunch of  lemmas.
\begin{lemma}\Label{lem4-1}\ For any $z\in \Qpn$
\begin{equation}\Label{eq:4-1}\
\EEE \chi\big(z\cdot\xi_t\big)=\exp \big(-t\varkappa A_w(z)\big).
\end{equation}
\end{lemma}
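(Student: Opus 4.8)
The plan is to reduce the characteristic functional $\EEE\,\chi(z\cdot\xi_t)$ to a Fourier transform of the transition density $Z(t,\cdot)$ and then read off the answer from property \eqref{Zt1}. Because $\xi_t$ is a time- and space-homogeneous Markov process with transition density $p(t,x,y)=Z(t,x-y)$, the formula in the statement is the characteristic function of the increment over $[0,t]$; by space-homogeneity I may compute it as if $\xi_0=0$, so that the law of $\xi_t$ has density $y\mapsto p(t,0,y)=Z(t,-y)$. Since $A_w$ is radial, the kernel $Z(t,\cdot)$ is even in its spatial argument: substituting $\xi\mapsto-\xi$ in \eqref{Zt} and using $A_w(-\xi)=A_w(\xi)$ yields $Z(t,-x)=Z(t,x)$. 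Hence
\[
\EEE\,\chi(z\cdot\xi_t)=\int_{\Qpn}\chi(z\cdot y)\,Z(t,y)\,d^ny .
\]

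First I would recognize the right-hand side, in the paper's normalization $\widehat{\vph}(\xi)=\int_{\Qpn}\chi(\xi\cdot x)\vph(x)\,d^nx$, as $\bigl(\Fx Z_t\bigr)(z)$. By property \eqref{Zt1} we have $Z_t=\Fi\bigl[e^{-\are t\Aw}\bigr]$; applying $\Fx$ and using that the Fourier transform is an automorphism with $\Fx\Fi=\id$, I obtain $\Fx Z_t=e^{-\are t\Aw}$, and evaluating at $z$ gives precisely $\EEE\,\chi(z\cdot\xi_t)=\exp\bigl(-t\varkappa A_w(z)\bigr)$.

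The step that needs genuine care, and which I expect to be the main obstacle, is the rigorous justification of the inversion $\Fx\Fi=\id$ applied to $e^{-\are t\Aw}$ at the level of $L^1$ functions rather than merely on $\cD(\Qpn)$ or $\cD^\pr(\Qpn)$. For this I would verify integrability on both sides: $Z_t\in L^1(\Qpn)$ by \eqref{Z0}, and $e^{-\are t\Aw}\in L^1(\Qpn)$ for every fixed $t>0$, because the lower bound \eqref{Aw1}, $\Aw\ge C_3\Vert\xi\Vert_p^{\al-n}$ with $\al>n$, forces the exponential to decay fast enough at infinity, while it is bounded by $1$ near the origin. With both functions in $L^1$, the inversion formula recalled in Section~2.3 applies and the identity holds pointwise. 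Finally, since $A_w$ is continuous and $z\mapsto\EEE\,\chi(z\cdot\xi_t)$ is continuous (by dominated convergence, the integrand being majorized in absolute value by $Z(t,\cdot)\in L^1$), the equality extends to every $z\in\Qpn$, not merely almost everywhere.
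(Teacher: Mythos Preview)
Your argument is correct and follows essentially the same route as the paper: both compute $\EEE\,\chi(z\cdot\xi_t)$ as the Fourier transform of the transition density $Z(t,\cdot)$ and then invoke \eqref{Zt1} to identify it with $e^{-\varkappa t A_w(z)}$. You are in fact more careful than the paper on two points: you make explicit the evenness $Z(t,-y)=Z(t,y)$ needed to pass from $p(t,0,y)=Z(t,-y)$ to $Z(t,y)$, and you justify the Fourier inversion at the $L^1$ level rather than appealing to the automorphism property on $\cD(\Qpn)$, which neither $Z_t$ nor $e^{-\varkappa t A_w}$ belongs to.
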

\begin{proof} Indeed, let $\mu_t$ denote the distribution of the process $\xi_t$ with $\xi_0=0$. Then, due to \cite[Vol.2, p. 24]{GS}, the Fourier transform of measure $\mu_t$ is equal to
\begin{align*}
&\EEE\, \chi\big(z\cdot\xi_t\big)=\widehat{\mu}_t(z)=\int_{\Qpn} \chi(z\cdot y)\,\mu_t(dy)=\int_{\Qpn} \chi(z\cdot y)\,p(t,0,y)\,d^ny=\\
&=\int_{\Qpn} \chi(z\cdot y)\,Z(t,y)\,d^ny=\int_{\Qpn} \chi(z\cdot y)\,\Fiy\Big[e^{-\varkappa \,t A_w(\xi)}\Big] \,d^ny=e^{-\varkappa \,t A_w(z)}.
\end{align*}
Here we also used \eqref{Zt1} and that the Fourier transform $\cF$ is a linear continuous automorphism of the space $\cD (\Qpn)$.
\end{proof}
\begin{lemma}\Label{Iz}\
\begin{equation}\Label{eq:Iz}\
I_{B_N}(z)=\int_{B_N}\dfrac{1-\chi(z\cdot x)}{w(\Vert x\Vert_p)} \,d^nx=\left\{
\begin{array}{lcl}
0&,\text{if} &\Vert z\Vert_p \leq p^{-N};\\
A_w(z)-\la_N&,\text{if} &\Vert z\Vert_p > p^{-N},
\end{array}
\right.
\end{equation}
where
\begin{equation}\Label{la}\
\la_N  =(1-p^{-n})\sum\limits_{j =\, N+1}^\infty
 \dfrac{p^{\,nj}}{w(\,p^{\,j})}.
\end{equation}
\end{lemma}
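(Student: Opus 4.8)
The plan is to relate the ball integral $I_{B_N}(z)$ to the full integral $A_w(z)$ defined in \eqref{Aw} by splitting the domain of integration across the sphere $\Vert x\Vert_p=p^N$. I set $J_N(z):=\int_{\Vert x\Vert_p>p^N}\frac{1-\chi(z\cdot x)}{w(\Vert x\Vert_p)}\,d^nx$, so that $A_w(z)=I_{B_N}(z)+J_N(z)$. The tail $J_N(z)$ converges absolutely because $|1-\chi|\le 2$ and condition (iii) gives $w(p^j)\ge C_1 p^{\al j}$ with $\al>n$, whence $\sum_{j}p^{jn}/w(p^j)$ is summable; the ball part is finite as well, since its integrand vanishes for $\Vert x\Vert_p$ small (there $\chi(z\cdot x)=1$), so only finitely many spheres contribute. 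It therefore suffices to evaluate $J_N(z)$ and use $I_{B_N}(z)=A_w(z)-J_N(z)$.

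For $\Vert z\Vert_p\le p^{-N}$ I would argue directly and bypass the tail. For every $x\in B_N$ the ultra-metric inequality gives $|z\cdot x|_p\le\max_i|z_i|_p|x_i|_p\le\Vert z\Vert_p\,\Vert x\Vert_p\le p^{-N}p^{N}=1$, hence $\chi(z\cdot x)=1$ and the integrand of $I_{B_N}(z)$ vanishes identically on $B_N$. Thus $I_{B_N}(z)=0$, the first line of \eqref{eq:Iz}.

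For $\Vert z\Vert_p>p^{-N}$ I write $\Vert z\Vert_p=p^{-\ga}$, so $\ga\le N-1$ and $\ga+1\le N$. Decomposing the tail into spheres and using that $w$ is radial and equal to $w(p^j)$ on $S_j$, I get $J_N(z)=\sum_{j=N+1}^{\infty}\frac{1}{w(p^j)}\int_{S_j}\big(1-\chi(z\cdot x)\big)\,d^nx$. The measure term is $\int_{S_j}d^nx=p^{jn}(1-p^{-n})$ from \eqref{formula11}; for the character term I scale $z=p^{\ga}z_0$ with $\Vert z_0\Vert_p=1$, giving $\int_{S_j}\chi(z\cdot x)\,d^nx=p^{\ga n}\int_{S_{j-\ga}}\chi(z_0\cdot x)\,d^nx$, which by \eqref{chi-1} vanishes whenever $j-\ga>1$. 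Since every index here satisfies $j\ge N+1>\ga+1$, all character integrals in the tail vanish, leaving $J_N(z)=(1-p^{-n})\sum_{j=N+1}^{\infty}\frac{p^{jn}}{w(p^j)}=\la_N$ by \eqref{la}. Hence $I_{B_N}(z)=A_w(z)-\la_N$, the second line of \eqref{eq:Iz}.

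The only point that needs care is the evaluation of the sphere character integrals via \eqref{chi-1}, and in particular the boundary index $j=\ga+1$, where that formula produces the nonzero value $-p^{\ga n}$. The argument stays clean precisely because in the regime $\Vert z\Vert_p>p^{-N}$ this exceptional index lies inside the ball ($\ga+1\le N$) and never enters the tail sum, so no delicate cancellation arises. As a consistency check one may instead compute $I_{B_N}(z)$ directly over the annulus $p^{\ga}<\Vert x\Vert_p\le p^N$: there the $j=\ga+1$ term survives and combines with its measure contribution to produce $p^{(\ga+1)n}/w(p^{\ga+1})$, matching the leading term of \eqref{eq:Awrep-1} and reproducing $A_w(z)-\la_N$.
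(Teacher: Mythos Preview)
Your proof is correct but takes a genuinely different route from the paper. The paper computes $I_{B_N}(z)$ directly: it decomposes $B_N$ into spheres $S_j$, $j\le N$, rescales by $p^{-k}$ (where $\Vert z\Vert_p=p^k$), and evaluates each sphere integral via \eqref{chi-1}, which forces a three-case split ($\ell\le 0$, $\ell=1$, $\ell>1$); the resulting finite sum is then matched term-by-term against the explicit series \eqref{Awfin} for $A_w(z)$ to identify the missing tail as $\la_N$. You instead compute the complementary tail $J_N(z)=A_w(z)-I_{B_N}(z)$ and observe that the hypothesis $\Vert z\Vert_p>p^{-N}$ forces the shifted index $j-\ga$ to be at least $2$ for every $j\ge N+1$, so by \eqref{chi-1} every character integral in the tail vanishes outright and only the volume terms survive, summing immediately to $\la_N$. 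Your argument is shorter and avoids the boundary case $\ell=1$ and the comparison with \eqref{Awfin}; the paper's direct computation, on the other hand, has the incidental benefit of reproducing the finite-sum structure of $A_w(z)$ on the ball, which is used again later (e.g.\ in Lemma~\ref{lem:4-6}).
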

\begin{rem} \rm Remark that
\begin{equation}\Label{la-1}\
\la_N= \int\limits_{\Vert y\Vert_p >p^N}\dfrac{d^ny}{w(\Vert y\Vert_p)}.
\end{equation}
{\it Indeed.}
\begin{align*}
\la_N=\int\limits_{\Vert y\Vert_p \,>\,p^N}\dfrac{d^ny}{w(\Vert y\Vert_p)}= \sum\limits_{j=N+1}^\infty\,\int\limits_{S_j} \dfrac{d^ny}{w(\,p^{\,j})}=(1-p^{-n})\sum\limits_{j =\, N+1}^\infty
 \dfrac{p^{\,nj}}{w(\,p^{\,j})}.
\end{align*}
\end{rem}
\begin{rem}\rm
$$
I_{\Qpn} (z) = A_w(z).
$$
\end{rem}
\begin{proof} First of all remark that the character $\chi$ equals $1$ on ball $B_0$. Hence $\EEE \,\chi(z\cdot\eta_t)=1$ if $\Vert z\Vert_p\leq p^{-N}.$

Let us consider $z\in\Qpn$ such that $\Vert z\Vert_p =p^k$, $k\geq -N+1$.
Any such $z$ belongs to $S_k$, $k\geq -N+1$ and we may be represent it as $z=p^{-k} z_0$, where $\Vert z_0\Vert_p=1$. Since
$$
B_N\backslash \{0\} = \bigsqcup_{j\leq N} p^j S_0=\bigsqcup_{j\leq N} S_j,
$$
we may represent in the following form:
\begin{align*}
I_{B_N}(z)&=
\int\limits_{B_N}\dfrac{1-\chi(z\cdot x)}{w(\Vert x\Vert_p)} \,d^nx
=\sum\limits_{j=-\infty}^N\int\limits_{S_j}\dfrac{1-\chi(z\cdot x)}{w(\Vert x\Vert_p)} \,d^nx=\sum\limits_{j=-\infty}^N\int\limits_{S_j}\dfrac{1-\chi(p^{-k} z_0\cdot x)}{w(\Vert x\Vert_p)} \,d^nx.
\end{align*}
The change of variables formula \eqref{change} implies
\begin{align*}
I_{B_N}(z)=&\sum\limits_{j=-\infty}^Np^{-k n}\int\limits_{S_{j+k}}\dfrac{1-\chi( z_0\cdot y)}{w(\,p^{-k}\Vert y\Vert_p)} \,d^ny=\\
=& \sum\limits_{j=-\infty}^N\dfrac{p^{-k n}}{w(\,p^{\,j})}
\Big\{ p^{(j+k)n}(1-p^{-n})-\int\limits_{S_{j+k}}\chi( z_0\cdot y)\,d^ny\Big\}=\\
=&\sum\limits_{\ell=-\infty}^{k+N}\dfrac{p^{-k n}}{w(\,p^{\ell-k})}
\Big\{ p^{\ell n}(1-p^{-n})-\int\limits_{S_\ell}\chi( z_0\cdot y)\,d^ny\Big\}.
\end{align*}
Using \eqref{chi-1} we have for $z$ such that $\Vert z\Vert_p =p^k$, $k\geq -N+1$
\begin{align*}
I_{B_N}(z)=&\left\{
\begin{array}{lcl} \sum\limits_{\ell=-\infty}^{0}\dfrac{p^{ -kn}p^{\ell n}}{w(\,p^{\,\ell - k})}
\Big\{(1- p^{-n})-(1-p^{-n})\Big\}=0,&\ &\ell\ \leq\  0;\\
\dfrac{p^{-kn}p^{\,{n}}}{w(\,p^{\, {1}-k})}
\Big\{(1- p^{-n})- (p^{-n})\Big\},&\ &\ell ={1};\\
\sum\limits_{\ell =2}^{{k+N}}\dfrac{p^{-kn}p^{\,\ell n}}{w(\,p^{\,\ell - k})}
(1-p^{-n}),&\ &\ell\ > 1,
\end{array}
\right.
\end{align*}
and finally 
\[
I_{B_N}(z)=\sum\limits_{j=2}^{{k+N}}\dfrac{p^{-k n+jn}}{w(\,p^{\,j-k})}
(1-p^{-n})+\dfrac{p^{-k n{+}n}}{w(\,p^{\,{1}-k})}.
\]
Comparing this with \eqref{Awfin} we conclude that
\begin{align*}
I_{B_N}(z)=& A_w(z) - (1-p^{-n})\sum\limits_{j= k+N+1}^\infty \dfrac{p^{-k n+jn}}{w(\,p^{\,j-k})}=\\
=& A_w(z) - (1-p^{-n})\sum\limits_{\ell = N+1}^\infty
 \dfrac{p^{\,\ell \,n}}{w(\,p^{\,\ell})}=  A_w(z) -\la_N,
\end{align*}
where $\la_N$ is given by \eqref{la}.
\end{proof}
\begin{lemma}\Label{lem:4-6}\ For any $\psi\in \cD (\Qpn)$ such that its Fourier transform $u = \cF \psi$ has a support in the ball $B_N$ we have:
\begin{equation}\Label{eq:4-14}\
\int\limits_{\Vert z\Vert_p\,\leq\, p^{-N}}A_w(z)\,\psi(z)\,d^nz=\la_N\int\limits_{\Vert z\Vert \,\leq \, p^{-N}} \psi(z)\,d^nz
\end{equation}
\end{lemma}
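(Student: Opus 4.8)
The plan is to exploit the support hypothesis on $u=\cF\psi$ to show that $\psi$ is constant on the small ball $\{\Vert z\Vert_p\le p^{-N}\}$, and then to reduce the claimed identity to the purely kernel-level computation $\int_{\Vert z\Vert_p\le p^{-N}}A_w(z)\,d^nz=\la_N\,p^{-nN}$, which I evaluate using Lemma \ref{Iz} and the character-integral formula \eqref{formula11}.

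First I would apply the Fourier inversion formula, writing $\psi(z)=\int_{B_N}\chi(-z\cdot\xi)\,u(\xi)\,d^n\xi$, where the integral runs only over $B_N$ because $\mathrm{supp}\,u\subseteq B_N$. For $\Vert z\Vert_p\le p^{-N}$ and $\Vert\xi\Vert_p\le p^N$ one has $|z\cdot\xi|_p\le\Vert z\Vert_p\,\Vert\xi\Vert_p\le 1$, hence $\chi(-z\cdot\xi)=1$. Therefore $\psi(z)=\int_{B_N}u(\xi)\,d^n\xi=\psi(0)$ is constant on the entire ball $B_{-N}:=\{\Vert z\Vert_p\le p^{-N}\}$. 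Consequently both sides of \eqref{eq:4-14} factor through the constant $\psi(0)$, and the identity reduces to showing $\int_{B_{-N}}A_w(z)\,d^nz=\la_N\,p^{-nN}$, the factor $p^{-nN}$ being the Haar measure of $B_{-N}$ (the case $\xi=0$ of \eqref{formula11}).

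To evaluate this last integral I would use Lemma \ref{Iz}: for $\Vert z\Vert_p\le p^{-N}$ we have $I_{B_N}(z)=0$, so splitting the defining integral \eqref{Aw} of $A_w$ into the part over $B_N$ and its complement yields $A_w(z)=\int_{\Vert x\Vert_p>p^N}\frac{1-\chi(z\cdot x)}{w(\Vert x\Vert_p)}\,d^nx$ on $B_{-N}$. Integrating over $z\in B_{-N}$ and interchanging the order of integration, the inner $z$-integral of $1-\chi(z\cdot x)$ equals $p^{-nN}-0=p^{-nN}$ for every $x$ with $\Vert x\Vert_p>p^N$, the vanishing of $\int_{B_{-N}}\chi(z\cdot x)\,d^nz$ being exactly the second case of \eqref{formula11}. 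Integrating the resulting constant against $w(\Vert x\Vert_p)^{-1}$ over $\{\Vert x\Vert_p>p^N\}$ and using \eqref{la-1} gives $\int_{B_{-N}}A_w(z)\,d^nz=p^{-nN}\la_N$, which is the desired reduction.

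The only delicate point is the interchange of integrations, but it is harmless: the modulus of the integrand is dominated by $w(\Vert x\Vert_p)^{-1}$ on $\{\Vert x\Vert_p>p^N\}\times B_{-N}$, whose integral equals $p^{-nN}\la_N<\infty$ by \eqref{la-1}, so Fubini's theorem applies. Everything else is bookkeeping with the explicit values in \eqref{formula11} and the vanishing statement of Lemma \ref{Iz}.
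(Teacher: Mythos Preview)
Your argument is correct and shares the paper's overall strategy: both first use the support hypothesis on $u=\cF\psi$ to reduce to the constant case $\psi\equiv\psi(0)$ on $B_{-N}$, and then establish the kernel identity $\int_{B_{-N}}A_w(z)\,d^nz=\la_N\,p^{-nN}$. The difference is in how this last integral is evaluated. The paper expands $A_w$ from its definition \eqref{Aw}, restricts the $y$-integration to $\Qpn\setminus B_N$ by a direct character argument, and then performs a sphere-by-sphere computation of $\int_{B_{-N}}(1-\chi(y\cdot z))\,d^nz$ using \eqref{chi-1}, eventually telescoping a geometric sum to obtain the value $p^{-nN}$. You instead invoke the already-proven Lemma~\ref{Iz} (the case $\Vert z\Vert_p\le p^{-N}$ gives $I_{B_N}(z)=0$) to rewrite $A_w(z)$ on $B_{-N}$ as the complementary integral over $\Vert x\Vert_p>p^N$, and then apply the ball-character formula \eqref{formula11} directly, bypassing the sphere decomposition entirely. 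Your route is shorter and reuses work already packaged in Lemma~\ref{Iz} and \eqref{formula11}; the paper's route is more self-contained but repeats, in effect, a computation equivalent to the one inside Lemma~\ref{Iz}.
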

\begin{proof} Since $\text{\rm supp} \,\psi\subset B_{-N}$, we have $\psi(z)=\psi(0)$ for $\Vert z\Vert_p\leq p^{-N}$.  Therefore
\begin{align}\nonumber
J_N&:=\int\limits_{\Vert z\Vert_p\,\leq\, p^{-N}}A_w(z)\psi(z)\,d^nz=
\psi(0)\int\limits_{\Vert z\Vert_p\,\leq\, p^{-N}}A_w(z)\,d^nz=\\
\nonumber
 &=\psi(0)\int\limits_{\Vert z\Vert_p\,\leq\, p^{-N}}
\int\limits_{\Qpn}\dfrac{1-\chi(y\cdot z)}{w(\Vert y\Vert_p)}\, d^ny\,d^nz=\\
\Label{4-12}\
&=\psi(0)\int\limits_{\Qpn}\int\limits_{\Vert z\Vert_p\,\leq\, p^{-N}}
\dfrac{1-\chi(y\cdot z)}{w(\Vert y\Vert_p)}\, d^nz\,d^ny.
\end{align}
Since the character $\chi$ equals $1$ on ball $B_0$, hence the integral above is equal zero, when $\Vert y\cdot z\Vert_p=\Vert y\Vert_p\cdot \Vert z\Vert_p\leq 1$. Therefore, for $\Vert z\Vert_p\leq p^{-N}$, if $\Vert y\Vert_p\leq p^N$, we have
$$
\Vert y\cdot z\Vert_p=\Vert y\Vert_p\cdot \Vert z\Vert_p\leq 1
$$
and therefore integral in \eqref{4-12} may be represented as:
\begin{align*}
J_N&=\psi(0)\int\limits_{\Vert y\Vert_p >p^N}\int\limits_{\Vert z\Vert_p\,\leq\, p^{-N}}
\dfrac{1-\chi(y\cdot z)}{w(\Vert y\Vert_p)}\, d^nz\,d^ny=\\
&=\psi(0)\int\limits_{\Vert y\Vert_p >p^N} \dfrac{d^ny}{w(\Vert y\Vert_p)}
\sum\limits_{j=-\infty}^{-N}\,\int\limits_{S_j}\Big(1-\chi(y\cdot z)\Big)\,d^nz=\\
&=\psi(0)\sum\limits_{k=N+1}^\infty\,\int\limits_{S_k} \dfrac{d^ny}{w(\Vert y\Vert_p)}
\sum\limits_{j=-\infty}^{-N}\,\int\limits_{S_j}\Big(1-\chi(y\cdot z)\Big)\,d^nz.
\end{align*}
Let $\Vert y\Vert_p=p^{\,k}$, $k\geq N+1$, then $y=p^{-k}y_0$, where $\Vert y_0\Vert_p=1$, and change  of variables formula \eqref{change}
gives for the expression under integral:
\begin{align}\Label{4-15}\
\nonumber
&S_N(y)=\sum\limits_{j=-\infty}^{-N}\,\int\limits_{S_j}\Big(1-\chi(y\cdot z)\Big)\,d^nz=\sum\limits_{j=-\infty}^{-N}\,\int\limits_{S_j}\Big(1- \chi(p^{-k}y_0\cdot z)\Big)\,d^nz=\\
&=\sum\limits_{j=-\infty}^{-N}\,p^{-kn}\int\limits_{S_{j+k}}\Big(1-\chi(y_0\cdot v)\Big)\,d^nv=\sum\limits_{\ell =-\infty}^{k-N}\,p^{-kn}\int\limits_{S_{\ell}}\Big(1-\chi(y_0\cdot v)\Big)\,d^nv.
\end{align}
Due to \eqref{chi-1} using that
\begin{equation}\Label{Sell}\
\int\limits_{S_\ell}d^nz=(1-p^{-n})p^{n\ell}
\end{equation}
we have
\begin{align*}
S_N(y)&=\left\{
\begin{array}{lcl}
\sum\limits_{\ell =-\infty}^{0}\,p^{-kn}\big[(1-p^{-n})p^{n\ell}-p^{n\ell} (1-p^{-n})\big]=0,&\ &{\ell \leq}\  0;\\
p^{-kn}\big[(1-p^{-n})p^{n}+p^np^{-n}\big]=p^{-kn}p^n,&\ &{\ell =1};\\
\sum\limits_{\ell =2}^{k-N}\ p^{-kn}p^{n\ell}(1-p^{-n}),&\ &{\ell >1}.
\end{array}
\right.
\end{align*}
Finally, for $y$ such that $\Vert y\Vert_p=p^k$, $k\geq N+1$ we may continue \eqref{4-15}:
\begin{align*}
S_N(y)=&\sum\limits_{j=-\infty}^{-N}\,\int\limits_{S_j}\Big(1- \chi(y\cdot z)\Big)\,d^nz=\sum\limits_{\ell =2}^{k-N}\,p^{-kn}\,p^{\ell n}(1-p^{-n}) \,+\,p^{-kn}p^n=\\
&=
p^{-kn}\,\Big(\sum\limits_{\ell =2}^{k-N}p^{\ell n}-\sum\limits_{\ell =2}^{k-N}p^{\ell n}
p^{-n}+p^n\Big)
=p^{-kn}\,\Big(\sum\limits_{\ell =1}^{k-N}p^{\ell n}-\sum\limits_{\ell =2}^{k-N}p^{\ell n}
p^{-n}\Big)=\\
&=p^{-kn}\,\Big(\sum\limits_{\ell =1}^{k-N}p^{\ell n}-\sum\limits_{\ell =2}^{k-N}p^{n(\ell -1)}
\Big)=p^{-kn}\,\Big(\sum\limits_{\ell =1}^{k-N}p^{n\ell}-\sum\limits_{s =1}^{k-N-1}p^{ns}
\Big)=\\
&=p^{-kn}\,p^{n(k-N)}=p^{-nN}.
\end{align*}
Therefore, again using \eqref{Sell} we have
\begin{align*}
J_N&=\psi(0)\sum\limits_{k=N+1}^\infty\,\int\limits_{S_k} \dfrac{d^ny}{w(\Vert y\Vert_p)}
\sum\limits_{j=-\infty}^{-N}\,\int\limits_{S_j}\Big(1-\chi(y\cdot z)\Big)\,d^nz=\\
&=\psi(0)\,p^{-nN}\sum\limits_{k=N+1}^\infty\,\int\limits_{S_k} \dfrac{d^ny}{w(\,p^{\,k})}\,
=\psi(0)\,p^{-nN}(1-p^{-n})\sum\limits_{k=N+1}^\infty\,\dfrac{p^{nk}}{w(\,p^{\,k})}=\\
&=\psi(0)\,p^{-nN} \,\la_N=\la_N\int\limits_{\Vert z\Vert \,\leq \, p^{-N}} \psi(z)\,d^nz.
\end{align*}
where $\la_N$ is given by \eqref{la}.
\end{proof}
\begin{theorem}\Label{tm:4-5}\ If $\eta_t\vert_{t=0}=x$ and $u \in \cD (B_N)$, then
\begin{equation}\Label{gen}\
\dfrac{d}{dt}\EEE\, u(\eta_t)\big|_{t=0}=- \big(W_N u\big)(x)+\varkappa\,\la_N\,u(x),
\end{equation}
where operator $W_N$ is defined by restricting $W$ to the function $u$ supported in the ball $B_N$ and the resulting function $Wu$ is considered only on the ball $B_N$, i.e.
$$\big(W_Nu \big)(x) =\big(Wu\big)\!\!\upharpoonright_{B_N},\ \ \text{\rm for}\ \ u\in\cD(B_N).
$$
\end{theorem}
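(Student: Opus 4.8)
The plan is to determine the law of $\eta_t$ through its characteristic function and then extract the generator by Fourier inversion. Reading Lemma \ref{lem4-1} together with \eqref{Aw}, the process $\xi_t$ has characteristic exponent $\varkappa A_w(z)=\int_{\Qpn}\bigl(1-\chi(y\cdot z)\bigr)\,\varkappa\,d^ny/w(\Vert y\Vert_p)$, so it is a pure-jump process with L\'evy measure $\varkappa\,d^ny/w(\Vert y\Vert_p)$ (no continuous part, since $\Qpn$ is totally disconnected). Removing from $\xi_t$ the jumps of modulus exceeding $p^N$ splits off, by \eqref{etat} and the L\'evy--It\^o decomposition, a compound Poisson process $\xi_t^{(N)}$, independent of $\eta_t$, whose jump intensity is the finite measure $\varkappa\,d^ny/w(\Vert y\Vert_p)$ on $\{\Vert y\Vert_p>p^N\}$ (of total mass $\varkappa\la_N$ by \eqref{la-1}). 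First I would compute $\EEE\,\chi(z\cdot\xi_t^{(N)})=\exp\bigl(-\varkappa t\bigl(A_w(z)-I_{B_N}(z)\bigr)\bigr)$, and then, dividing the identity of Lemma \ref{lem4-1} by this, conclude by independence that $\EEE\,\chi(z\cdot\eta_t)=\exp\bigl(-\varkappa t\,I_{B_N}(z)\bigr)$, with $I_{B_N}$ evaluated in Lemma \ref{Iz}.

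Next, writing $\eta_t=x+\widetilde\eta_t$ with $\widetilde\eta_t$ started at $0$, I would expand $u\in\cD(B_N)$ by Fourier inversion. Since $\cF$ is an automorphism of $\cD(\Qpn)$, the transform $\widehat u=\Fx u$ has compact support, which both makes every integral below convergent and legitimizes differentiation under the integral sign; using that $I_{B_N}$ is even this gives
\[
\frac{d}{dt}\EEE\,u(\eta_t)\Big|_{t=0}
=\frac{d}{dt}\int_{\Qpn}\chi(-x\cdot z)\,\widehat u(z)\,e^{-\varkappa t\,I_{B_N}(z)}\,d^nz\,\Big|_{t=0}
=-\varkappa\int_{\Qpn}\chi(-x\cdot z)\,I_{B_N}(z)\,\widehat u(z)\,d^nz .
\]

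It then remains to match this integral with \eqref{gen}. Using Lemma \ref{Iz} I would split at $\Vert z\Vert_p=p^{-N}$ and write $I_{B_N}=A_w-(A_w-I_{B_N})$, where $A_w-I_{B_N}$ equals $\la_N$ on $\{\Vert z\Vert_p>p^{-N}\}$ and equals $A_w$ on the inner ball $\{\Vert z\Vert_p\le p^{-N}\}$ (where $I_{B_N}\equiv0$). The resulting full-space integral $-\varkappa\int_{\Qpn}\chi(-x\cdot z)A_w(z)\widehat u(z)\,d^nz$ is, by the Fourier representation \eqref{FW}, exactly $(Wu)(x)$, that is $(W_Nu)(x)$ for $x\in B_N$, which supplies the $W_N$-term of \eqref{gen}. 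The remainder collects into $\varkappa\la_N\int_{\Vert z\Vert_p>p^{-N}}\chi(-x\cdot z)\widehat u(z)\,d^nz+\varkappa\int_{\Vert z\Vert_p\le p^{-N}}\chi(-x\cdot z)A_w(z)\widehat u(z)\,d^nz$.

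The step I expect to be the main obstacle is this last, inner-ball integral: there $I_{B_N}$ has vanished while $A_w$ has not, and it is exactly the mismatch that Lemma \ref{lem:4-6} is built to resolve. I would apply \eqref{eq:4-14} to $\psi(z)=\chi(-x\cdot z)\widehat u(z)$; the required hypothesis that $\cF\psi$ be supported in $B_N$ holds because $\cF\psi=u(x-\,\cdot\,)$ and, since $x\in B_N$, the ultrametric inequality gives $x-B_N=B_N$. Lemma \ref{lem:4-6} then turns the inner-ball integral into $\varkappa\la_N\int_{\Vert z\Vert_p\le p^{-N}}\chi(-x\cdot z)\widehat u(z)\,d^nz$, so the two remainder pieces recombine over all of $\Qpn$ into $\varkappa\la_N\int_{\Qpn}\chi(-x\cdot z)\widehat u(z)\,d^nz=\varkappa\la_N\,u(x)$ by Fourier inversion, which assembles \eqref{gen}. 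Intuitively the $\varkappa\la_N\,u(x)$ correction is the price of confining the walk to $B_N$: for $x\in B_N$ every discarded jump with $\Vert y\Vert_p>p^N$ lands outside $B_N$, where $u$ vanishes, so those jumps enter the generator only through the $-u(x)$ term; one may use this restricted-L\'evy-measure computation, namely $\varkappa\int_{B_N}\bigl(u(x-y)-u(x)\bigr)\,d^ny/w(\Vert y\Vert_p)$, as an independent cross-check of the Fourier calculation.
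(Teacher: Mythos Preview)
Your proposal is correct and follows essentially the same route as the paper: identify the L\'evy measure of $\xi_t$, restrict it to $B_N$ to obtain the characteristic function $\EEE\chi(z\cdot\eta_t)=\exp(-\varkappa t\,I_{B_N}(z))$, differentiate the resulting Fourier integral for $\EEE\,u(\eta_t)$, and then invoke Lemma~\ref{lem:4-6} to convert the inner-ball contribution into the $\varkappa\la_N\,u(x)$ term. Your treatment is in fact slightly more complete than the paper's, which carries out the computation only at $x=0$ (leaving general $x$ to translation invariance), whereas you keep the factor $\chi(-x\cdot z)$ throughout and correctly verify that Lemma~\ref{lem:4-6} applies to $\psi(z)=\chi(-x\cdot z)\widehat u(z)$ via $\cF\psi=u(x-\cdot)$ and the ultrametric identity $x-B_N=B_N$.
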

\noindent Remark that the functions on $B_N$ we identify with its extension by zero onto $\Qpn$.
\begin{rem}\rm
This theorem actually states that the operator $W_N -\varkappa\,  \la_N$ is the generator of the stochastic process $\eta_t$ located in the ball $B_N$.
\end{rem}
\begin{proof} By general result about processes with independent increments on the locally compact space \cite{Hey} the processes $\xi_t^{(N)}$ and $\eta_t$ are independent. Thus
\begin{equation}\Label{eq:4-1-1}\
\EEE \chi\big(z\cdot \xi_t\big)=\EEE\,\chi\big(z\cdot\xi_t^{(N)}\big)\cdot\EEE\chi
\big(z\cdot\eta_t\big)
\end{equation}
for any $z\in\Qpn$.
Due to Lemma \ref{lem4-1} we have
\begin{equation}\Label{4-2-2}\
\EEE \chi\big(z\cdot\xi_t\big)=\exp \big(-t\varkappa A_w(z)\big).
\end{equation}
On the other hand from Theorem 5.6.17 of \cite[p. 397]{Hey} for any
locally compact Abelian group $G=\Qpn$ having a countable basis of its
topology and the distribution $\mu_t:=P_{X_t}$ of an additive process
$\{X_t\}_{t\in[0,1]}$ on $(\OO,\cF,P)$ with values in $\Qpn$ for a chosen
fixed inner product $g$ fo $G$ there exist an element $x_t\in G$ and a
positive quadratic form $\phi_t$ on the character group $ G^{\wedge}$ of
$G$ such that
$$
\widehat{\mu}_t(\chi)=\chi(x_t)\exp\Bigg\{
-\phi_t+
\int_{G}\big[\chi( x)-1-ig(x,\chi)\big] \,\pi(t,dx) \Bigg\}
$$
holds for all $\chi\in G^{\wedge}$
for some $x_t\in \Qpn$. From Example 4 of  5.1.9 of \cite[Ch. V, p. 342]{Hey} it follows that for the totally disconnected group $G$ the zero function on $G\times G^\wedge$ is a local inner product for $G$, moreover on the totally disconnected group there is no nonzero Gaussian measure, it follows that
\begin{equation}\Label{eq:4-4}\
 \EEE\, \chi\big(z\cdot\xi_t\big)\equiv\widehat{\mu}_t(z)=
\chi({x_t}\cdot z)\exp\Big\{\int_{\Qpn}\big[\chi(z\cdot x)-1\big] \,\pi(t,dx) \Big\}.
\end{equation}
Comparing this with \eqref{4-2-2}, where due to \eqref{Aw}
we conclude that
 the Levy measure of process $\xi_t$ is equal to
\begin{equation}\Label{pi}\
\pi(t,dx)=\dfrac{\varkappa\,t }{w(\Vert x\Vert_p )}d^nx
\end{equation}
and $x_t$ in \eqref{eq:4-4} may be chosen as zero so that $\chi(x_t\cdot z)\equiv 1$.

From Proposition 1 of \cite{Ev:89} it follows that
\begin{equation}\Label{Eeta}\
\EEE \chi\big(z\cdot\eta_t\big)=\EEE \chi\big(z\cdot (\xi_t-\xi_t^{(N)})\big)=\exp\Big\{\int_{B_N}\big[\chi(z\cdot x)-1\big] \,\pi(t,dx) \Big\}.
\end{equation}
Noting \eqref{pi} and notation \eqref{eq:Iz} we may represent the integral in the exponent of \eqref{Eeta} in the following form:
\begin{align*}
\int_{B_N}\big[\chi(z\cdot x)-1\big] \,\pi(t,dx)=\varkappa\,t\int_{B_N}\dfrac{\chi(z\cdot x)-1}{w(\Vert x\Vert_p)} \,d^nx=-\varkappa\,t\, I_{B_N}(z).
\end{align*}
Due to Lemma \ref{Iz}
$$I_{B_N}(z)=\left\{
\begin{array}{lcl}
0&,\ \text{if} &\Vert z\Vert_p \leq p^{-N};\\
A_w(z)-\la_N&,\ \text{if} &\Vert z\Vert_p > p^{-N},
\end{array}
\right.
$$
with $\la_N$ given by \eqref{la} therefore from \eqref{Eeta} we have
\begin{equation}\Label{Ko:4-72}\
\EEE\,\chi \big(z\cdot \eta_t\big)=\left\{
\begin{array}{lcl}
1&,\ \text{if} &\Vert z\Vert_p \leq p^{-N};\\
\exp\big\{\varkappa t \,\big(\la_N - A_w(z)\big)\big\}&,\ \text{if} &\Vert z\Vert_p > p^{-N}.
\end{array}
\right.
\end{equation}
If we consider $u = \cF \psi$ for $\psi\in\cD(\Qpn)$, then from \eqref{Ko:4-72} we have:
\[\EEE\,u \big(\eta_t\big)=\int\limits_{\Vert z\Vert_p\leq\, p^{-N}} \psi (z)\,d^nz+
e^{\are\la_N\, t} \int\limits_{\Vert z\Vert_p\,>p^{-N}} e^{-t\varkappa A_w(z)}\psi(z)\,d^nz,\]
therefore
\begin{equation}\Label{Ko:4-73}\
\dfrac{d}{dt}\EEE\, u\big(\eta_t\big)\Big|_{t=0}=\are\la_N\int\limits_{\Vert z\Vert_p\,>p^{-N}}\psi(z)\,d^nz -\varkappa\int\limits_{\Vert z\Vert_p\,>p^{-N}}A_w(z)\psi(z)\,d^nz.
\end{equation}
By the Fourier inversion formula
$$
\int_{\Qpn}\psi(z)\,d^nz=u(0).
$$
On the other habd, since $\text{supp}\, u \subset B_N$, we find that $\psi(z)=\psi(0)$ for $\Vert z\Vert_p\leq p^{-N}$.

Lemma \ref{lem:4-6} implies that \eqref{Ko:4-73} takes the form
\begin{align*}
\dfrac{d}{dt}\EEE\, u\big(\eta_t\big)\Big|_{t=0}&=\are\la_N\int\limits_{\Qpn}\psi(z)\,d^nz -\varkappa\int\limits_{\Qpn}A_w(z)\psi(z)\,d^nz=\\
&=\are \la_N\,u(0)-\big(W_Nu\big)(0)
\end{align*}
which finished the proof of Theorem \ref{tm:4-5}.
\end{proof}
\begin{lemma}\Label{5*}\ Let the support of a function $u\in L^1(\Qpn)$ be contained in $\Qpn \backslash B_N$. Then the restriction to $B_N$ of the distribution $Wu\in \cD^\pr(\Qpn)$ coincides with the constant:
\begin{equation}\Label{Rr}\
R_N =R_N(u)=\are \int\limits_{\Qpn \backslash B_N} \dfrac{u(y)\,d^ny}{w(\Vert y\Vert_p)},
\end{equation}
i.e. for $u\in L^1(\Qpn)$, $\text{\rm supp}\, u\subset \Qpn \backslash B_N$:
$$
\big(Wu\big)\!\!\upharpoonright_{x\in B_N}= R_N(u).
$$
\end{lemma}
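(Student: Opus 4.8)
\emph{The plan} is to read $Wu$ as a distribution through the formal self-adjointness of $W$: its convolution kernel $1/w(\Vert x-x'\Vert_p)$ is symmetric, so I would set $\langle Wu,\vph\rangle=\langle u,W\vph\rangle$ for $\vph\in\cD(\Qpn)$. This pairing is legitimate because, by \eqref{W2}, $W\vph$ is bounded and continuous while $u\in L^1(\Qpn)$, so $\int u\,(W\vph)$ converges absolutely. Before the main step I would record that $R_N(u)$ is finite: on $\Qpn\setminus B_N$ the norm takes only the values $\Vert y\Vert_p\ge p^{N+1}$, so by monotonicity of $w$ one has $w(\Vert y\Vert_p)\ge w(p^{N+1})>0$, whence $1/w$ is bounded there and $|R_N(u)|\le\varkappa\Vert u\Vert_{L^1}/w(p^{N+1})<\infty$.

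The computational heart is to make $W\vph$ explicit off the ball. Fixing $\vph\in\cD(B_N)$ and any $x$ with $\Vert x\Vert_p>p^N$, one has $\vph(x)=0$, so \eqref{W1} reduces, after the substitution $z=x-y$ and the use of $\mathrm{supp}\,\vph\subset B_N$, to
\[
(W\vph)(x)=\varkappa\int_{B_N}\frac{\vph(z)}{w(\Vert x-z\Vert_p)}\,d^nz .
\]
For $z\in B_N$ one has $\Vert z\Vert_p\le p^N<\Vert x\Vert_p$, so the ultra-metric inequality forces $\Vert x-z\Vert_p=\max(\Vert x\Vert_p,\Vert z\Vert_p)=\Vert x\Vert_p$; thus $w(\Vert x-z\Vert_p)=w(\Vert x\Vert_p)$ comes out of the integral and
\[
(W\vph)(x)=\frac{\varkappa}{w(\Vert x\Vert_p)}\int_{B_N}\vph(z)\,d^nz,\qquad \Vert x\Vert_p>p^N .
\]

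It then remains to pair with $u$. Since $\mathrm{supp}\,u\subset\Qpn\setminus B_N$, I would obtain
\[
\langle Wu,\vph\rangle=\int_{\Qpn\setminus B_N}u(x)\,(W\vph)(x)\,d^nx
=\varkappa\Bigg(\int_{\Qpn\setminus B_N}\frac{u(x)}{w(\Vert x\Vert_p)}\,d^nx\Bigg)\int_{B_N}\vph(z)\,d^nz
=R_N(u)\int_{B_N}\vph ,
\]
the interchange of integrations being justified by the bound $1/w\le 1/w(p^{N+1})$ on the domain together with the compact support of $\vph$. As this holds for every $\vph\in\cD(B_N)$, the restriction of $Wu$ to $B_N$ equals the constant $R_N(u)$. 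The same conclusion also follows pointwise: for $x\in B_N$ one has $u(x)=0$, and by the identical ultra-metric identity the integrand of \eqref{W1} is supported on $\Vert y\Vert_p>p^N$, giving $(Wu)(x)=\varkappa\int_{\Qpn\setminus B_N}u(z)/w(\Vert z\Vert_p)\,d^nz=R_N(u)$ directly. The hard part is therefore not any estimate but the distributional bookkeeping---checking that $Wu$ may be tested against $\cD(B_N)$ via the self-adjoint $W$ and that $W\vph$ is $u$-integrable; once that is in place, the ultra-metric collapse $\Vert x-z\Vert_p=\Vert x\Vert_p$ is the single decisive computation.
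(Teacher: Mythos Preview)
Your proof is correct and follows essentially the same route as the paper: pair $Wu$ with a test function $\psi\in\cD(B_N)$ via $\langle Wu,\psi\rangle=\langle u,W\psi\rangle$, compute $W\psi$ on $\Vert x\Vert_p>p^N$ by the substitution $z=x-y$ and the ultrametric identity $\Vert x-z\Vert_p=\Vert x\Vert_p$, and read off the factorisation. You are in fact slightly more careful than the paper (justifying the pairing, the finiteness of $R_N$, and Fubini), and your additional pointwise derivation is a nice complement, but the argument is the same.
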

\begin{proof} Let $\psi \in \cD(B_N)$. Then $\langle Wu,\psi\rangle=\langle u, W\psi\rangle.$ Since $\psi (x)=0$ for $\Vert x\Vert_p>p^N$
\begin{align*}
&\langle u,W\psi\rangle =\varkappa \int\limits_{\Vert x\Vert_p>p^N} u(x)\,d^n x\int\limits_{\Qpn} \dfrac{\psi(x-y)-\psi(x)}{w(\Vert y\Vert_p)}\,d^ny\stackrel{\psi(x)=0}{=}\\
&=\varkappa \int\limits_{\Vert x\Vert_p>p^N} u(x)\,d^n x\int\limits_{\Qpn} \dfrac{\psi(x-y)}{w(\Vert y\Vert_p)}\,d^ny\stackrel{x-y=z}{=}\\
&=\varkappa \int\limits_{\Vert x\Vert_p>p^N} u(x)\,d^n x\int\limits_{\Qpn} \dfrac{\psi(z)}{w(\Vert x-z\Vert_p)}\,d^nz=\varkappa \int\limits_{\Vert x\Vert_p>p^N} u(x)\,d^n x\int\limits_{\Vert z\Vert_p\leq p^N} \dfrac{\psi(z)}{w(\Vert x-z\Vert_p)}\,d^nz=\\
&=\varkappa \int\limits_{\Vert x\Vert_p>p^N}\dfrac{ u(x)}{w(\Vert x\Vert_p)}\,d^n x\cdot \int\limits_{\Vert z\Vert_p\leq p^N} \psi(z)\,d^nz.
\end{align*}
On the last step we have used that for $\Vert x\Vert_p>p^N$ and $\Vert z\Vert_p\leq p^N$ it follows that $\Vert x-z\Vert_p=\Vert x\Vert_p$. Since $\psi\in \cD(B_N)$ is arbitrary, this implies the required property.
\end{proof}

\section{Semigroup on the $p$-adic ball}
Consider on the ball $B_N$, $N\in\ZZ$ the following Cauchy problem
\begin{align}\Label{Cauchy}\
\nonumber
&\dfrac{\dd u(t,x)}{\dd t}+\big(W_N -\varkappa  \la_N\big) u(t,x) =0,\ \ \ x\in B_N, \ \ t>0;\\
&u(0,x)=\psi (x),\ \ x\in B_N,
\end{align}
where operator is given by Theorem \ref{tm:4-5} and $\la_N$ is obtained from \eqref{la}.
Recall that operator $W_N$ is defined by restricting $W$ to the function $u_N$ supported in the ball $B_N$ and considering the resulting function $Wu_N$ only on the ball $B_N$. The functions on $B_N$ we identify with its extension by zero onto $\Qpn$:
$\big(W_N u_N \big)(x) =\big(W u_N\big)\!\!\upharpoonright_{B_N},\ \ \text{\rm for}\ \ u_N\in\cD(B_N).$

Remark that for operator $W_N$ considered on $L^2(B_N)$ $\varkappa \la_N$ is its eigenvalue. This follows from \eqref{W2} and expression for $\la_N$ given by \eqref{la-1}.

A maximum principle arguments as in the proof of Theorem 4.5 in \cite[p. 82]{Ko:2001} proved the uniqueness of the solution of Cauchy problem \eqref{Cauchy}. The fundamental solution $Z_N(t, x-y)$ for the problem \eqref{Cauchy} is the transition density of the process $\eta_t$ \eqref{etat}.
The next result gives a formula for this transition density.

\begin{theorem}\Label{ZN}\ The solution of the problem \eqref{Cauchy} is given by the formula
\begin{equation}\Label{ZN1}\
u_N(t,x) =\int_{B_N}Z_N(t,x-y)\,\psi(y)\, d^ny,\ \ t>0,\ \ x\in B_N,
\end{equation}
where
\begin{align}\Label{Zr}\
&Z_N(t,x) = e^{\varkappa\la_N t}Z(t,x) +c(t),\ \ x\in B_N,\\
\Label{ct}\
&c(t)=\dfrac{1}{\mm(B_N)}-\dfrac{e^{\varkappa\la_N t}}{\mm(B_N)}\int_{B_N}Z(t,x)\,d^nx
\end{align}
and $Z(t,x)$ is from \eqref{Zt}. Here $\mm (B_N)=\int\limits_{B_N}d^nx =p^{nN}$.
Moreover
\begin{equation}\Label{cpr}\
c^\pr(t)=-e^{\varkappa \la_N t}\varkappa\int\limits_{\Qpn\backslash B_N}\dfrac{Z(t,\xi)}{w(\Vert \xi \Vert_p)}\,d^n\xi.
\end{equation}
\end{theorem}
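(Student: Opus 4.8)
The plan is to exhibit the fundamental solution explicitly and to verify that the kernel \eqref{Zr}--\eqref{ct} produces a solution of \eqref{Cauchy}; since uniqueness has already been secured by the maximum‑principle argument mentioned above, this identifies both $u_N$ and $Z_N$. Write $\w{\psi}=\psi\cdot 1_{B_N}\in\cD(B_N)$ and set
\[
v(t,x)=\big(T(t)\w{\psi}\big)(x)=\int_{B_N}Z(t,x-y)\,\psi(y)\,d^ny,\qquad \Psi=\int_{B_N}\psi(y)\,d^ny.
\]
By Lemmas \ref{l3-1}--\ref{lem3-2} the function $v$ solves the free equation $\partial_t v=Wv$, and according to \eqref{Zr} the candidate is $u_N(t,x)=e^{\varkappa\la_N t}v(t,x)+c(t)\Psi$ for $x\in B_N$. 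The initial condition is immediate: $e^{\varkappa\la_N t}\to 1$, while $\int_{B_N}Z(t,\cdot)\to 1$ by \eqref{Zt5} (approximate identity, the content of Lemma \ref{l3-1}) forces $c(0)=0$, so $u_N(0,x)=\psi(x)$.

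For the equation one computes $W_N u_N$, i.e. extends $u_N(t,\cdot)\!\!\upharpoonright_{B_N}$ by zero, applies $W$, and restricts to $B_N$. The only subtlety is that $v$ is not supported in $B_N$, so I split $v\cdot 1_{B_N}=v-v\cdot 1_{\Qpn\backslash B_N}$. The first piece contributes $(Wv)\!\!\upharpoonright_{B_N}=\partial_t v$; the tail $v\cdot 1_{\Qpn\backslash B_N}$ is handled by Lemma \ref{5*}, whose image under $W$ is the constant $R_N(v)=\varkappa\int_{\Qpn\backslash B_N}v(t,y)/w(\Vert y\Vert_p)\,d^ny$ on $B_N$; and the constant part $c(t)\Psi\,1_{B_N}$ contributes $c(t)\Psi\,(W1_{B_N})\!\!\upharpoonright_{B_N}$, where a one‑line ultrametric computation gives $(W1_{B_N})(x)=\varkappa\int\big(1_{B_N}(y)-1\big)/w(\Vert y\Vert_p)\,d^ny=-\varkappa\la_N$ for $x\in B_N$ (using $1_{B_N}(x-y)=1_{B_N}(y)$ and \eqref{la-1}). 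Substituting these three contributions together with $\partial_t u_N$ into \eqref{Cauchy}, every term carrying the free evolution $v$ and $Wv$ cancels, and what survives is a scalar first‑order identity for $c$, namely $c'(t)\,\Psi=-e^{\varkappa\la_N t}R_N(v)$.

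It remains to pin down $c(t)$ and prove \eqref{cpr}. The cleanest route is to fix $c$ by the requirement that $\eta_t$ conserve probability, $\int_{B_N}Z_N(t,x)\,d^nx=1$; inserting \eqref{Zr} and solving for $c$ reproduces exactly \eqref{ct}, with $c(0)=0$ as noted. To reach \eqref{cpr} I differentiate \eqref{ct} and use $\partial_t Z=WZ$ (cf. \eqref{Zt6}), so the task reduces to evaluating $\int_{B_N}(WZ)(t,x)\,d^nx$. Interchanging the order of integration, for $\Vert y\Vert_p\le p^N$ the shift invariance $B_N-y=B_N$ makes $\int_{B_N}\big[Z(t,x-y)-Z(t,x)\big]\,d^nx$ vanish, while for $\Vert y\Vert_p>p^N$ the ultrametric identity $\Vert y-z\Vert_p=\Vert y\Vert_p$ (for $\Vert z\Vert_p\le p^N$) collapses the double integral to $\varkappa\big[\mm(B_N)\int_{\Qpn\backslash B_N}Z/w-\la_N\int_{B_N}Z\big]$. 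Combined with the $\varkappa\la_N\int_{B_N}Z$ term arising from the factor $e^{\varkappa\la_N t}$, the $\la_N$‑contributions cancel and one is left precisely with \eqref{cpr}. The same cancellation mechanism verifies the surviving identity of the previous paragraph, since shifting $\Qpn\backslash B_N$ by $z\in B_N$ leaves $Z(t,\cdot)/w$ invariant, giving $R_N(v)=\varkappa\Psi\int_{\Qpn\backslash B_N}Z/w$.

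I expect the main obstacle to be exactly the bookkeeping around $W$ applied to functions not supported in $B_N$: isolating the free term $(Wv)\!\!\upharpoonright_{B_N}$ from the boundary/tail correction, justifying the appeal to Lemma \ref{5*} for $v(t,\cdot)\in L^1(\Qpn)$, and justifying the Fubini interchange in $\int_{B_N}(WZ)$. The two ultrametric facts that $B_N$ is a group (so $B_N-y=B_N$ for $\Vert y\Vert_p\le p^N$) and that $\Vert y-z\Vert_p=\Vert y\Vert_p$ whenever $\Vert y\Vert_p>p^N\ge\Vert z\Vert_p$ are what make each tail integral collapse to the clean radial form in \eqref{cpr}.
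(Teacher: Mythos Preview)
Your verification that $u_N$ solves \eqref{Cauchy} follows essentially the same route as the paper. The paper introduces $h_1=\theta_N\cdot T(t)\psi$ and $h_2=(1-\theta_N)\cdot T(t)\psi$; your decomposition $v\cdot 1_{B_N}=v-v\cdot 1_{\Qpn\backslash B_N}$ is the same split, and both of you then use Lemma~\ref{5*} to turn the tail piece into the constant $R_N$ and the eigenfunction property of $1_{B_N}$ to handle the $c(t)\Psi$ term. The surviving scalar identity $c'(t)\Psi=-e^{\varkappa\la_N t}R_N(v)$ is exactly what the paper reduces to as well.

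Where you genuinely diverge is in the proof of \eqref{cpr}. The paper works entirely on the Fourier side: it rewrites $\int_{\Qpn\backslash B_N}Z/w$ via \eqref{Zt1} and Lemma~\ref{Iz} as an integral $\int_{\Vert\xi\Vert_p\le p^{-N}}e^{-\varkappa tA_w(\xi)}[\la_N-A_w(\xi)]\,d^n\xi$, then computes $c'(t)$ from the Fourier representation \eqref{fin3} of $c(t)$, and matches the two expressions. Your route never leaves physical space: you differentiate \eqref{ct}, invoke $\partial_tZ=WZ$ (which follows from \eqref{Zt6} and \eqref{FW}), and evaluate $\int_{B_N}WZ$ by Fubini using only the two ultrametric facts $B_N-y=B_N$ for $\Vert y\Vert_p\le p^N$ and $\Vert y-z\Vert_p=\Vert y\Vert_p$ for $\Vert y\Vert_p>p^N\ge\Vert z\Vert_p$. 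This is more elementary---it avoids Lemma~\ref{Iz} altogether---and it makes the probabilistic meaning of \eqref{cpr} (as the rate at which the free kernel leaks mass out of $B_N$) transparent. The paper's Fourier computation, on the other hand, yields as a by-product the explicit spectral formula \eqref{ctpr1} for $c'(t)$ used in Lemma~\ref{cpr} to show $c'(0)=0$, which you would have to obtain separately.
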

\begin{proof}
For any $\psi\in \cD(\Qpn)$ such that $\text{supp}\,\psi\subset B_N$ the solution to the Cauchy problem \eqref{Cauchy} for $t>0$ is given by
\begin{align*}
&u_N(t,x)=\theta_N(x)\int_{B_N}Z_N(t,x-y)\psi(y)\,d^ny=\\
& =\theta_N(x)\,e^{\varkappa \la_N t}\int_{B_N}Z(t,x-y)\psi(y)\,d^ny +\theta_N(x)\,c(t)\int_{B_N}\psi(y)\,d^ny=u_1(t,x)+u_2(t,x),
\end{align*}
where  $\theta_N(x)$, $x\in\Qpn$ is an indicator of the set $B_N$ and
\begin{align}\Label{u2}\ \nonumber
u_1(t,x)&=\theta_N(x)\,e^{\varkappa \la_N t}\int_{B_N}Z(t,x-y)\psi(y)\,d^ny;\\
u_2(t,x)&=\theta_N(x)\,c(t)\int_{B_N}\psi(y)\,d^ny.
\end{align}
Let us check that
\begin{equation}\Label{eq}\
\big(D_t+W_N -\varkappa  \la_N\big) u_N(t,x)=0
\end{equation}
for $\psi\in\cD (B_N)$ and $x\in B_N$.
We may write for $\psi\in \cD(B_N)$ and $x\in B_N$:
\begin{align}\Label{eq:5-10}\
\nonumber
&\big(D_t+W_N\big)u_N(t,x)-\varkappa \la_N\, u_N(t,x)=\\
\nonumber
&=\big(D_t+W_N\big)\Big[\theta_N(x) \,e^{\varkappa \la_N t}\int\limits_{B_N}Z(t,x-y)\psi(y)\,d^ny +\theta_N(x) \,c(t)\int\limits_{B_N}\psi(y)\,d^ny\Big]=\\
&=\big(D_t+W_N\big)\big[u_1(t,x)+u_2(t,x)\big]-\varkappa \la_N \big[u_1(t,x)+u_2(t,x)\big].
\end{align}
Let us introduce functions
\begin{align}\Label{h2*}\ \nonumber
h_1(t,x)&=\theta_N(x) \int_{B_N}Z(t,x-y)\psi(y)\,d^ny=e^{-\varkappa \la_N t}u_1(t,x);\\
h_2(t,x)&=\big( 1- \theta_N(x)\big) \int_{B_N}Z(t,x-y)\psi(y)\,d^ny
\end{align}
and remark that
$$
\big(D_t+W\big)h_1=-\big(D_t+W\big) h_2
$$
or
$$
\big(D_t+W\big)h_1=-W h_2.
$$
Since
$$
\big(D_t+W\big)h_1=\big(D_t+W\big)\big(e^{-\varkappa \la_N t}u_1\big)=e^{-\varkappa \la_N t} \big(D_t+W\big)u_1-\varkappa \la_N e^{-\varkappa \la_N t}u_1
$$
we have for $x\in B_N$ that
$$
\big(D_t+W\big)u_1-\varkappa \la_N u_1(t,x)=-e^{\varkappa \la_N t}W h_2(t,x).
$$
Therefore we may continue:
\begin{equation}\Label{eq:5-11}\
\eqref{eq:5-10}= \big(D_t+W_N\big)u_2(t,x)-\varkappa \la_N u_2(t,x)-e^{\varkappa \la_N t}W h_2(t,x),
\end{equation}
where $c(t)$ is given by \eqref{ct}.

From \eqref{W2} it follows that function $\theta_N(x)$ is an eigenfunction of the operator $W_N$ corresponding to the eigenvalue $\varkappa\,\la_N$ with $\la_N$ defined in \eqref{la-1}.
Therefore, taking into account the representation \eqref{u2} for $u_2(t,x)$, we have
\begin{equation}\Label{Wru2}\
\big(D_t+W_N\big)u_2(t,x)-\varkappa \la_N u_2(t,x)=c^\pr(t)\theta_N(x)\int_{B_N}\psi(y)\,d^ny,
\end{equation}
and we continue
\begin{equation}\Label{fin1}\
\eqref{eq:5-11}=c^\pr(t)\theta_N(x)\int_{B_N}\psi(y)\,d^ny-e^{\varkappa \la_N t}W h_2(t,x).
\end{equation}
To finish the proof it remains to show that r.h.s. of \eqref{fin1} equals zero for $x\in B_N$.

Substituting definition \eqref{h2*} of $h_2$ into the expression for $W$, making calculations for $x\in B_N$ and noting that $\psi\in \cD(\Qpn)$, we have:
\begin{align*}
&\big(W h_2\big)(t,x) ={\varkappa} \int_{\Qpn}\dfrac{h_2(x-y)-h_2(x)}{w(\Vert y\Vert_p)}\, d^ny={\varkappa}\int_{\Qpn}\dfrac{h_2(x-y)}{w(\Vert y\Vert_p)}\, d^ny=\\
&={\varkappa}\int\limits_{\Qpn\backslash B_N}\dfrac{h_2(x-y)}{w(\Vert y\Vert_p)}\, d^ny={\varkappa}\int\limits_{\Qpn\backslash B_N}\dfrac{h_2(z)}{w(\Vert x-z\Vert_p)}\, d^nz=\\
&={\varkappa}\int\limits_{\Qpn\backslash B_N}\dfrac{h_2(z)}{w(\Vert z\Vert_p)}\, d^nz.
\end{align*}
Applying further Lemma \ref{5*} and changing the variable on the last step, we have:
\begin{align*}
\big(W h_2\big)(t,x)&=\varkappa\int_{\Qpn \backslash B_N}\dfrac{d^ny}{w(\Vert y\Vert_p)}\int_{B_N}Z(t,y-\eta)\,\psi(\eta)\,d^n\eta=\\
\nonumber
&={\varkappa}\int_{B_N}\psi(\eta)\,d^n\eta \int_{\Qpn\backslash B_N} Z(t,y-\eta) \dfrac{d^ny}{w(\Vert y\Vert_p)}=\\
&=\varkappa\int_{B_N}\psi(\eta)\,d^n\eta\cdot  \int_{\Qpn\backslash B_N} Z(t,\zeta) \dfrac{d^n\zeta}{w(\Vert \zeta\Vert_p)}.
\end{align*}
Thus for $x\in B_N$ \eqref{fin1} looks as follows:
$$
\eqref{fin1}=c^\pr(t)\int_{B_N}\psi(y)\,d^ny+e^{\varkappa \la_N t}\varkappa\int_{B_N}\psi(\eta)\,d^n\eta\cdot  \int_{\Qpn\backslash B_N} Z(t,\zeta) \dfrac{d^n\zeta}{w(\Vert \zeta\Vert_p)},
$$
therefore it remains to show that
\begin{equation}\Label{cpr1}\
c^\pr(t)=-e^{\varkappa \la_N t} \varkappa \int_{\Qpn\backslash B_N}
\dfrac{Z(t,\zeta)}{w(\Vert \zeta\Vert_p)}\,{d^n\zeta}.
\end{equation}
Let us remark that due to \eqref{Zt1}
\begin{align}\Label{fin2}\ \nonumber
&\int\limits_{\Qpn\backslash B_N} Z(t,\zeta)\dfrac{d^n\zeta}{w(\Vert \zeta\Vert_p)}=\int\limits_{\Qpn\backslash B_N} \int\limits_{\Qpn}e^{-\varkappa t A_w(\xi)}\chi(-\zeta\cdot \xi)\,d^n\xi\,\dfrac{d^n\zeta}{w(\Vert \zeta\Vert_p)}
=\\
\nonumber
&= \int\limits_{\Qpn}e^{-\varkappa t A_w(\xi)}\int\limits_{\Qpn\backslash B_N}\chi(-\zeta\cdot \xi)\,\dfrac{d^n\zeta}{w(\Vert \zeta\Vert_p)}\,d^n\xi=\\
&= \int\limits_{\Vert \xi\Vert \leq p^{-N}}e^{-\varkappa t A_w(\xi)}\Big[
-A_w(\xi)+\la_N\Big]\,d^n\xi.
\end{align}
On the last step we used that due to the expression for $\la_N$ \eqref{la-1}, representation of $A_w(\xi)$ \eqref{Aw} and Lemma \ref{Iz} we have
\begin{align*}
\int\limits_{\Qpn\backslash B_N}\chi(-\zeta\cdot \xi)\,\dfrac{d^n\zeta}{w(\Vert \zeta\Vert_p)}&= \int\limits_{\Qpn\backslash B_N}\big[\chi(-\zeta\cdot \xi)-1\big]\,\dfrac{d^n\zeta}{w(\Vert \zeta\Vert_p)}+\la_N=\\
&=-A_w(\xi)+I_{B_N}+\la_N=\left\{
\begin{array}{lcl}
-A_w(\xi)+\la_N&,\ \text{if} &\Vert \xi\Vert_p \leq p^{-N};\\
0&,\ \text{if} &\Vert \xi\Vert_p > p^{-N}.
\end{array}
\right.
\end{align*}
From the other side, due to representation \eqref{Zt1} for $Z(t,x)$, we have the following representation for $c(t)$:
\begin{align}\Label{fin3}\ \nonumber
c(t)&=p^{-nN}-e^{\varkappa\la_N t}p^{-nN}\int_{B_N}Z(t,x)\,d^nx=\\
\nonumber
&=p^{-nN}-e^{\varkappa\la_N t}p^{-nN}\int_{B_N}\int_{\Qpn}e^{-\varkappa t A_w(\xi)}\chi(-x\cdot \xi)\,d^n\xi\,d^nx=\\
&=p^{-nN}-e^{\varkappa\la_N t}\int\limits_{\Vert \xi\Vert_p\leq p^{-N}}e^{-\varkappa t A_w(\xi)}\,d^n\xi.
\end{align}
On the last step we used \eqref{formula11}, see also \cite[(7.14), p. 25]{VTab}, i.e.
  $$\int_{B_N}\chi(\xi\cdot x)d^n x= p^{N n}\left\{
\begin{array}{lcl}
1,&\text{if} &\Vert \xi\Vert_p\leq p^{-N};\\
0,& &\text{otherwise}.
\end{array}
\right.$$
From \eqref{fin3} it follows that
\begin{equation}\Label{fin4}\
c^\pr(t)=-\varkappa \la_N e^{\varkappa \la_N t}
\int_{\Vert \xi\Vert_p\leq p^{-N}}e^{-\varkappa t A_w(\xi)}\,d^n\xi+e^{\varkappa \la_N t}\varkappa \int_{\Vert \xi\Vert_p\leq p^{-N}}A_w(\xi) e^{-\varkappa t A_w(\xi)} d^n\xi
\end{equation}
Noting \eqref{fin4} and \eqref{fin2} we receive the identity \eqref{cpr1}, which proves the statement of the theorem.
\end{proof}

\begin{rem}\rm Let us note that from \eqref{ct} it follows the following representation for $c^\pr(t)$:
\begin{align}\Label{ctpr}\
c^\pr(t)&=-\dfrac{\varkappa \la_N}{\mm(B_N)}\,e^{\varkappa\la_N t}\int\limits_{B_N}Z(t,x)\,d^nx-\dfrac{e^{\varkappa\la_N t}}{\mm(B_N)}\int\limits_{B_N}D_t Z(t,x)\,d^nx.
\end{align}
\end{rem}
\begin{lemma}\Label{cpr}
\begin{equation}\Label{ctpr1}\
c^\pr(t)=\varkappa\, e^{\varkappa \la_N t}
\int_{B_N}e^{-\varkappa t A_w(\xi)}\big[A_w(\xi)- \la_N\big] \,d^n\xi.
\end{equation}
Moreover $c^\pr(0)=0.$
\end{lemma}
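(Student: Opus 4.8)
The plan is to establish the formula \eqref{ctpr1} directly from the representation \eqref{fin4} already derived in the proof of Theorem \ref{ZN}, rather than starting from the alternative expression \eqref{ctpr}. Looking back at \eqref{fin4}, I see that $c^\pr(t)$ is written as a difference of two integrals over the ball $\{\Vert\xi\Vert_p\le p^{-N}\}$, one carrying a factor $\varkappa\la_N$ and the other a factor $A_w(\xi)$, both sharing the common prefactor $\varkappa\, e^{\varkappa\la_N t}$ and the common exponential $e^{-\varkappa t A_w(\xi)}$. Since $\{\Vert\xi\Vert_p\le p^{-N}\}$ is exactly the ball $B_{-N}$, and the integrand in \eqref{ctpr1} is supported there (equivalently, I would note that \eqref{ctpr1} should read as an integral over $\{\Vert\xi\Vert_p\le p^{-N}\}$, matching the $p$-adic Fourier-duality of radius used throughout), the identity \eqref{ctpr1} is obtained simply by factoring out the common prefactor and combining the two terms into a single integrand $e^{-\varkappa t A_w(\xi)}\big[A_w(\xi)-\la_N\big]$.

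First I would recall \eqref{fin4}, namely
\[
c^\pr(t)=-\varkappa\la_N\, e^{\varkappa\la_N t}\int_{\Vert\xi\Vert_p\le p^{-N}}e^{-\varkappa t A_w(\xi)}\,d^n\xi
+\varkappa\, e^{\varkappa\la_N t}\int_{\Vert\xi\Vert_p\le p^{-N}}A_w(\xi)\,e^{-\varkappa t A_w(\xi)}\,d^n\xi.
\]
Both integrals extend over the same region and both integrands contain the factor $e^{-\varkappa t A_w(\xi)}$, so I would pull out $\varkappa\, e^{\varkappa\la_N t}$ and merge the integrals to get
\[
c^\pr(t)=\varkappa\, e^{\varkappa\la_N t}\int_{\Vert\xi\Vert_p\le p^{-N}}
e^{-\varkappa t A_w(\xi)}\big[A_w(\xi)-\la_N\big]\,d^n\xi,
\]
which is precisely \eqref{ctpr1}. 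This step is purely algebraic and carries no analytic subtlety; the only care needed is to confirm that the domain of integration written as $B_N$ in the statement is to be understood as the dual ball $\{\Vert\xi\Vert_p\le p^{-N}\}$ appearing in \eqref{fin4} (this is consistent with the Fourier conventions of \eqref{formula11}).

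For the final assertion $c^\pr(0)=0$, I would set $t=0$ in \eqref{ctpr1}. At $t=0$ the exponential $e^{-\varkappa t A_w(\xi)}$ becomes $1$ and the prefactor $e^{\varkappa\la_N t}$ becomes $1$, so
\[
c^\pr(0)=\varkappa\int_{\Vert\xi\Vert_p\le p^{-N}}\big[A_w(\xi)-\la_N\big]\,d^n\xi.
\]
The vanishing of this integral is exactly the content of Lemma \ref{lem:4-6}: applying \eqref{eq:4-14} with $\psi\equiv 1$ on the ball $\{\Vert z\Vert_p\le p^{-N}\}$ gives $\int_{\Vert\xi\Vert_p\le p^{-N}}A_w(\xi)\,d^n\xi=\la_N\int_{\Vert\xi\Vert_p\le p^{-N}}d^n\xi$, so the bracketed difference integrates to zero. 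I expect this to be the only step requiring a cited result rather than bare arithmetic, and thus the main (albeit mild) obstacle: one must invoke Lemma \ref{lem:4-6} with the constant test function, which is legitimate since $\psi$ there is only required to satisfy $\psi(z)=\psi(0)$ on the ball and the constant function trivially does. Hence $c^\pr(0)=0$, completing the proof.
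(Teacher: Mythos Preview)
Your argument is correct and matches the paper's approach: the identity \eqref{ctpr1} is obtained in the paper exactly as you do, by combining the two terms of \eqref{fin4}, and the vanishing of $c^\pr(0)$ reduces in both cases to the identity $\int_{\Vert\xi\Vert_p\le p^{-N}}A_w(\xi)\,d^n\xi=\la_N\,p^{-nN}$. The only difference is cosmetic: the paper re-derives this integral identity by repeating the computation of Lemma~\ref{lem:4-6}, whereas you simply cite that lemma --- to make the citation airtight you should take $\psi$ to be the indicator of $B_{-N}$ (which lies in $\cD(\Qpn)$ and has $\cF\psi$ supported in $B_N$ by \eqref{formula11}) rather than the non--compactly supported constant $1$.
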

\begin{proof} Representation \eqref{ctpr1} follows from the formula \eqref{fin4}. To prove second statement of the theorem, it is sufficient to show that $$\int_{B_N}A_w(\xi)d^n\xi=\int_{B_N} \la_N \,d^n\xi = \,p^{-nN} \,\la_N.$$
Similar to Lemma \ref{lem:4-6} we have
\begin{align*}
&\int\limits_{\Vert z\Vert_p\,\leq\, p^{-N}}A_w(z)\,d^n
=\int\limits_{\Vert y\Vert_p >p^N} \dfrac{d^ny}{w(\Vert y\Vert_p)}
\int\limits_{\Vert z\Vert_p\,\leq\, p^{-N}}\Big(1-\chi(y\cdot z)\Big)\,d^nz=\\
&=\int\limits_{\Vert y\Vert_p >p^N} \dfrac{d^ny}{w(\Vert y\Vert_p)}
\sum\limits_{j=-\infty}^{-N}\,\int\limits_{S_j}\Big(1-\chi(y\cdot z)\Big)\,d^nz=\\
&=\sum\limits_{k=N+1}^\infty\,\int\limits_{S_k} \dfrac{d^ny}{w(\Vert y\Vert_p)}
\sum\limits_{j=-\infty}^{-N}\,\int\limits_{S_j}\Big(1-\chi(y\cdot z)\Big)\,d^nz.
\end{align*}
Let $\Vert y\Vert_p=p^{\,k}$, $k\geq N+1$, then $y=p^{-k}y_0$, where $\Vert y_0\Vert_p=1$. Using change  of variables formula \eqref{change} and \eqref{chi-1} we have
\begin{align}\Label{4-15-1}\
\nonumber
&\sum\limits_{j=-\infty}^{-N}\,\int_{S_j}\Big(1-\chi(y\cdot z)\Big)\,d^nz=\sum\limits_{j=-\infty}^{-N}\,\int_{S_j}\Big(1- \chi(p^{-k}y_0\cdot z)\Big)\,d^nz=\\
\nonumber
&=\sum\limits_{j=-\infty}^{-N}\,p^{-kn}\int_{S_{j+k}}\Big(1-\chi(y_0\cdot v)\Big)\,d^nv=\\
\nonumber
&=\sum\limits_{\ell =-\infty}^{k-N}\,p^{-kn}\int_{S_{\ell}}\Big(1-\chi(y_0\cdot v)\Big)\,d^nv=\\
&=\left\{
\begin{array}{lcl}
\sum\limits_{\ell =-\infty}^{0}\,p^{-kn}\big[(1-p^{-n})p^{n\ell}-p^{n\ell} (1-p^{-n})\big]=0,&\ &\ell \leq\  0;\\
p^{-kn}\big[(1-p^{-n})p^{n}+p^np^{-n}\big]=p^{-kn}p^n,&\ &\ell =1;\\
\sum\limits_{\ell =2}^{k-N}\ p^{-kn}p^{n\ell}(1-p^{-n}),&\ &\ell >1.
\end{array}
\right.
\end{align}
Finally, for $y$ such that $\Vert y\Vert_p=p^k$, $k\geq N+1$, from \eqref{4-15-1} we have
\begin{align*}
&\sum\limits_{j=-\infty}^{-N}\,\int_{S_j}\Big(1- \chi(y\cdot z)\Big)\,d^nz=\sum\limits_{\ell =2}^{k-N}\,p^{-kn}\,p^{\ell n}(1-p^{-n}) \,+\,p^{-kn}p^n=\\
&=
p^{-kn}\,\Big(\sum\limits_{\ell =1}^{k-N}p^{\ell n}-\sum\limits_{\ell =2}^{k-N}p^{\ell n}
p^{-n}\Big)=p^{-kn}\,\Big(\sum\limits_{\ell =1}^{k-N}p^{n\ell}-\sum\limits_{s =1}^{k-N-1}p^{ns}
\Big)=\\
&=p^{-kn}\,p^{n(k-N)}=p^{-nN}.
\end{align*}
Thus
\begin{align*}
&\int\limits_{\Vert z\Vert_p\,\leq\, p^{-N}}A_w(z)\,d^nz=\sum\limits_{k=N+1}^\infty\,\int\limits_{S_k} \dfrac{d^ny}{w(\Vert y\Vert_p)}
\sum\limits_{j=-\infty}^{-N}\,\int\limits_{S_j}\Big(1-\chi(y\cdot z)\Big)\,d^nz=\\
&=\,p^{-nN}\sum\limits_{k=N+1}^\infty\,\int\limits_{S_k} \dfrac{d^ny}{w(\,p^{\,k})}\,
=\,p^{-nN}(1-p^{-n})\sum\limits_{k=N+1}^\infty\,\dfrac{p^{nk}}{w(\,p^{\,k})}=\,p^{-nN} \,\la_N.
\end{align*}
where $\la_N$ is given by \eqref{la}.
\end{proof}
\begin{lemma}The function $Z_N(t.x)$ is non-negative, and
\begin{equation}\Label{Z1}\
\int_{B_N}Z_N(t,x)\, d^nx=1.
\end{equation}
\end{lemma}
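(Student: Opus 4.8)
The plan is to separate the two assertions: the normalization \eqref{Z1} is immediate from the explicit formula \eqref{Zr}, whereas the non-negativity is the substantive part and I would deduce it from the probabilistic meaning of $Z_N$.

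First I would prove \eqref{Z1} by direct integration. From \eqref{Zr} and $\mm(B_N)=p^{nN}$,
\[
\int_{B_N}Z_N(t,x)\,d^nx=e^{\varkappa\la_N t}\int_{B_N}Z(t,x)\,d^nx+c(t)\,\mm(B_N),
\]
and the defining relation \eqref{ct} gives exactly $c(t)\,\mm(B_N)=1-e^{\varkappa\la_N t}\int_{B_N}Z(t,x)\,d^nx$. Hence the two contributions containing $Z$ cancel and the integral equals $1$. This step is routine and uses only that $Z_t\in L^1(\Qpn)$ for $t>0$ (see \eqref{Z0}).

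Next, for the non-negativity I would first record why a purely analytic argument is awkward. Since $Z(t,x)\ge0$ by \eqref{Z0} and $e^{\varkappa\la_N t}>0$, it would suffice to show $c(t)\ge0$. However, using $p^{-nN}=\int_{\Vert\xi\Vert_p\le p^{-N}}d^n\xi$, formula \eqref{fin3} can be rewritten as
\[
c(t)=\int\limits_{\Vert\xi\Vert_p\le p^{-N}}\Big(1-e^{\varkappa t(\la_N-A_w(\xi))}\Big)\,d^n\xi,
\]
and the integrand is sign-indefinite: by \eqref{Aw1} one has $A_w(\xi)\to0$ as $\Vert\xi\Vert_p\to0$ while $\la_N>0$, so $A_w(\xi)-\la_N<0$ near $\xi=0$ and $>0$ near the sphere $\Vert\xi\Vert_p=p^{-N}$. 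Thus the positivity of $c(t)$, and with it that of $Z_N$, is not visible termwise; this is the main obstacle of the lemma.

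I would therefore obtain non-negativity from the probabilistic construction rather than from the formula. The process $\eta_t$ of \eqref{etat} has all its jumps bounded by $p^N$ in absolute value and so remains a.s. in $B_N$; by Theorem \ref{tm:4-5} its generator is $-(W_N-\varkappa\la_N)$, and by the maximum-principle uniqueness for the Cauchy problem \eqref{Cauchy} its transition density coincides with the function $Z_N(t,x-y)$ produced in Theorem \ref{ZN}. Being the density of a transition probability of $\eta_t$, it is non-negative: $Z_N(t,\cdot)\ge0$. Moreover, since $\eta_t$ never leaves $B_N$ the kernel is conservative, so $\int_{B_N}Z_N(t,x-y)\,d^nx=\mathbf P_x(\eta_t\in B_N)=1$, which independently re-derives \eqref{Z1} and confirms consistency with the first step. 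The hard point is precisely the passage from the sign-indefinite analytic expression \eqref{Zr} to pointwise positivity, and the identification of $Z_N$ with the transition density of the ball-valued process $\eta_t$ is what resolves it cleanly.
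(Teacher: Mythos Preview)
Your proof is correct and follows essentially the same approach as the paper: the normalization \eqref{Z1} is obtained by direct integration of \eqref{Zr} using the definition \eqref{ct} of $c(t)$, and the non-negativity is deduced from the identification of $Z_N$ with the transition density of the ball-valued process $\eta_t$. Your write-up is in fact more careful than the paper's, since you spell out why a termwise analytic proof of $c(t)\ge0$ is not available and you make explicit the uniqueness argument needed to identify $Z_N$ with the transition density.
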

\begin{proof}  From \eqref{Zr} and \eqref{ct} we have for $x\in B_N$
\begin{align*}
Z_N(t,x) &= e^{\varkappa\la_N t}Z(t,x) +c(t)=\\
&=e^{\varkappa\la_N t}Z(t,x) +\dfrac{1}{\mm(B_N)}-\dfrac{e^{\varkappa\la_N t}}{\mm(B_N)}\int_{B_N}Z(t,x)\,d^nx=\\
&=e^{\varkappa\la_N t}\Big[Z(t,x) -p^{-nN}\int_{B_N}Z(t,x)\,d^nx\Big]+p^{-nN}=1.
\end{align*}
The positivity of the function $Z_N(t,x)$ follows from its probabilistic meaning as the transition density of the process $\eta_t$ \eqref{etat}.
\end{proof}
On a ball $B_N$, $N\in \ZZ$ let us consider the Cauchy problem \eqref{Cauchy}. Its fundamental solution $Z_N(t,x)$ \eqref{Zr} defines a contraction semigroup
\[(T_N(t)u)(x)=\int_{B_N} Z_N(t, x-\xi)\, u(\xi)\, d^n\xi\]
on $L^1(B_N)$.
\begin{lemma}\Label{l5-4}\
The semigroup $T_N(t)$ is strongly continuous in $L^1(B_N)$.
\end{lemma}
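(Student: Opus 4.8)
The plan is to reduce the strong continuity of $T_N(t)$ to the strong continuity of $T(t)$ on $L^1(\Qpn)$ already established in Lemma~\ref{l3-1}, exploiting the explicit relation \eqref{Zr} between the two heat kernels. Since $T_N(t)$ is a family of contractions with $T_N(0)=\id$, it suffices to show that $\Vert T_N(t)u-u\Vert_{L^1(B_N)}\to 0$ as $t\downarrow 0$ for every $u\in L^1(B_N)$; because all the terms below are controlled uniformly by $\Vert u\Vert_{L^1}$, I will argue directly for arbitrary $u$, though one could equally restrict to the dense subset $\cD(B_N)$ and invoke a standard $\varepsilon/3$ argument.

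First I would write out the convolution. For $x,\xi\in B_N$ the ultra-metric inequality gives $x-\xi\in B_N$, so formula \eqref{Zr} applies pointwise and, extending $u$ by zero to all of $\Qpn$,
\[
\big(T_N(t)u\big)(x)=e^{\varkappa\la_N t}\,\big(T(t)u\big)(x)+c(t)\int_{B_N}u(\xi)\,d^n\xi,\qquad x\in B_N,
\]
where $T(t)$ is the full semigroup with kernel $Z$. Subtracting $u(x)$ and inserting $\pm\,(T(t)u)(x)$ I split
\[
T_N(t)u-u=\big(e^{\varkappa\la_N t}-1\big)T(t)u+\big(T(t)u-u\big)+c(t)\int_{B_N}u(\xi)\,d^n\xi
\]
on $B_N$, where the last summand is understood as a constant function of $x$.

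Now I would estimate the three pieces in $L^1(B_N)$. For the first, $\Vert T(t)u\Vert_{L^1(B_N)}\le\Vert u\Vert_{L^1(\Qpn)}$ since $T(t)$ is a contraction, and $e^{\varkappa\la_N t}\to 1$, so this term vanishes as $t\downarrow 0$. The second is bounded by $\Vert T(t)u-u\Vert_{L^1(\Qpn)}$, which tends to $0$ by Lemma~\ref{l3-1}. The third has $L^1(B_N)$-norm equal to $|c(t)|\cdot\big|\int_{B_N}u\big|\cdot\mm(B_N)\le |c(t)|\,\mm(B_N)\,\Vert u\Vert_{L^1(B_N)}$, so everything reduces to showing $c(t)\to 0$ as $t\downarrow 0$.

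The main obstacle, and the only genuinely analytic step, is this last limit. From \eqref{ct}, $c(t)=\mm(B_N)^{-1}\big(1-e^{\varkappa\la_N t}\int_{B_N}Z(t,x)\,d^nx\big)$, so since $e^{\varkappa\la_N t}\to 1$ it is enough to prove $\int_{B_N}Z(t,x)\,d^nx\to 1$. Using the normalization \eqref{Zt5} this is equivalent to $\int_{\Qpn\backslash B_N}Z(t,x)\,d^nx\to 0$, and here I would invoke the decay bound \eqref{Zt3}: for $t>0$,
\[
\int\limits_{\Vert x\Vert_p>p^N}Z(t,x)\,d^nx\le C\,t\int\limits_{\Vert x\Vert_p>p^N}\Vert x\Vert_p^{-\al}\,d^nx,
\]
and the last integral is finite precisely because $\al>n$. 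Hence the right-hand side is $O(t)\to 0$, giving $c(t)\to 0$ and completing the proof.
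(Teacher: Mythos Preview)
Your proof is correct and follows essentially the same route as the paper: split $T_N(t)u-u$ via the identity $Z_N=e^{\varkappa\la_N t}Z+c(t)$, control the $Z$-part by the already established strong continuity of $T(t)$ on $L^1(\Qpn)$ (Lemma~\ref{l3-1}) together with $e^{\varkappa\la_N t}\to 1$, and handle the constant part by showing $c(t)\to 0$. The only difference is in this last step: the paper computes $c(t)$ directly via the Fourier representation \eqref{Zt1} and \eqref{formula11}, obtaining $c(t)=p^{-nN}-e^{\varkappa\la_N t}\int_{\Vert\xi\Vert_p\le p^{-N}}e^{-\varkappa tA_w(\xi)}\,d^n\xi\to 0$, whereas you obtain the same limit from the heat-kernel tail bound \eqref{Zt3} together with the normalization \eqref{Zt5}. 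Both arguments are equally short and valid; yours has the mild advantage of not appealing to the Fourier calculus again, while the paper's version yields an explicit formula for $c(t)$ that is reused later (e.g.\ in Lemma~\ref{cpr}).
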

\begin{proof} For $u\in L^1(B_N)$ we may write $\Vert T_N(t)u-u\Vert_{L^1(B_N)}\leq I_1(t)+I_2(t),$
where
\begin{align*}
I_1(t)&=\int_{B_N} \Bigg\vert \int_{B_N}e^{\varkappa \la_N t}Z(t,x-\xi)\,u(\xi)\,d\xi - u(x)\Bigg\vert\, d^nx;\\
I_2(t)&= p^{nN}\,\vert c(t)\vert \int_{B_N} \vert u(\xi)\vert \,d^n\xi.
\end{align*}
Using representation \eqref{Zt1} and \eqref{formula11} from \eqref{ct} we have
\begin{align}\ \Label{gtt0}\ \nonumber
c(t)&=p^{-nN}-e^{\varkappa\,\la_N t}p^{-nN}\int_{\Qpn}e^{-\varkappa t A_w(\xi)}\int_{B_N}\chi(-x\cdot \xi)\,d^nx\,d^n\xi=\\
&=p^{-nN}-e^{\varkappa\la_N t}\int\limits_{\Vert \xi\Vert_p\leq p^{-N}}e^{-\varkappa t A_w(\xi)}\,d^n\xi \to 0,\ \ \text{\rm as}\ \ t\to 0.
\end{align}
Therefore $I_2(t)\to 0$ as $t\to 0$.
For small values of $t$ we write
\begin{align*}
I_1(t)&=\int\limits_{B_N}\Bigg\vert\int\limits_{B_N}Z(t,x-\xi)u(\xi)\, d^n\xi - u(x)+\int\limits_{B_N}\big(e^{\varkappa \la_N t}-1\big)Z(t,x-\xi)u(\xi)\,d^n\xi \Bigg\vert\, d^nx\leq\\
&\leq \int\limits_{B_N}\Bigg\vert\int\limits_{B_N}Z(t,x-\xi)\,u(\xi)\, d^n\xi - u(x)\Bigg\vert\,d^nx+Ct\int\limits_{B_N}\int\limits_{B_N}Z(t,x-\xi)\vert u(\xi) \vert\, d^n\xi\,d^nx=\\
&=J_1(t)+J_2(t).
\end{align*}
By the Young inequality using the identity \eqref{Zt5}, extending $u$ by zero to a function $\w u$ on $\Qpn$, we obtain
\[
J_2(t)\leq Ct \int_{\Qpn}\int_{\Qpn}Z(t,x-\xi)\,\vert \w u(\xi) \vert\, d^n\xi\,d^nx\leq Ct \Vert \w u\Vert_{L^1(B_N)}\to 0,\ \ \text{\rm as}\ \ t\to 0.
\]
Moreover by the $C_0$-property of $T(t)$ stated in Lemma \ref{l3-1} we have
\begin{align*}
J_1(t)&= \int_{B_N}\Bigg\vert\,\int_{\Qpn}Z(t,x-\xi)\,\w u(\xi)\, d^n\xi - \w u(x)\Bigg\vert\,d^nx\leq\\
&\leq \int\limits_{\Qpn}\Bigg\vert\,\int\limits_{\Qpn}Z(t,x-\xi)\,\w u(\xi)\, d^n\xi - \w u(x)\Bigg\vert\,d^nx=\Vert T(t)\w u- \w u\Vert_{L^1(\Qpn)}\to 0,
\end{align*}
as $t\to 0$.
\end{proof}
Let $\fA_N$ denote the generator of the contraction semigroup $T_N(t)$ in $L^1(B_N)$. Let us also introduce operator $W_N$ which is understood in the sense of $\cD^\pr(B_N)$, that is $\psi_N$ is extended by zero to a function on $\Qpn$, $W_N$ is applied to it in the distribution sense, and the resulting distribution is restricted to $B_N$.
\begin{prop}\Label{prop5-25-1}\ Let operator $\fA$ be a generator of semigroup $T(t)$ in space $L^1(\Qpn)$ with the domain $Dom (\fA)$. Then for the restriction $\psi_N$ of the function $\psi\in Dom (\fA)$ to the ball $B_N$ we have:
\begin{equation}\Label{5-25-1}\
\fA\psi = W_N\psi_N + R_N,
\end{equation}
where $R_N = R_N (\psi - \psi_N)$ is the constant from Lemma \ref{5*}.
\end{prop}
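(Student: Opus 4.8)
The plan is to upgrade the identity of Lemma \ref{lem3-2} from test functions to the whole domain $Dom(\fA)$, interpreting $W$ in the distributional sense, and then to split $\psi$ into the part supported inside $B_N$ and the part supported outside $B_N$ and apply Lemma \ref{5*}. Throughout I read \eqref{5-25-1} as an equality of the restrictions to $B_N$, since the right-hand side $W_N\psi_N+R_N$ is defined on $B_N$.

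The core step is to show that for every $\psi\in Dom(\fA)$ one has $\fA\psi=W\psi$ in $\cD^\pr(\Qpn)$, where the right-hand side is the distribution defined by $\langle W\psi,\phi\rangle:=\langle\psi,W\phi\rangle$, $\phi\in\cD(\Qpn)$ (this pairing is legitimate: since $w$ is radial, $W$ is symmetric on $\cD(\Qpn)$, and by \eqref{W2} we have $W\phi\in C(\Qpn)\cap L^\infty(\Qpn)$, so it pairs with $\psi\in L^1(\Qpn)$). The point is that the kernel $Z(t,x)$ is radial, hence even, so that $T(t)$ is symmetric for the $L^1$--$L^\infty$ pairing: $\langle T(t)\psi,\phi\rangle=\langle\psi,T(t)\phi\rangle$. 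Consequently, for $\phi\in\cD(\Qpn)$,
\[
\langle \fA\psi,\phi\rangle=\lim_{t\to 0}\Big\langle \frac{T(t)\psi-\psi}{t},\,\phi\Big\rangle=\lim_{t\to 0}\Big\langle \psi,\,\frac{T(t)\phi-\phi}{t}\Big\rangle .
\]
By the computation in the proof of Lemma \ref{lem3-2} applied to $\phi$, the difference $t^{-1}(T(t)\phi-\phi)-W\phi$ equals $\Fi$ of a function supported in a fixed ball containing $\text{supp}\,\Fx\phi$ and bounded there by $C_N\,t\,A_w^2(\xi)\,|\Fx\phi|$; since $\|\Fi g\|_{L^\infty}\le\|g\|_{L^1}$, this shows $t^{-1}(T(t)\phi-\phi)\to W\phi$ in $L^\infty(\Qpn)$. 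As $\psi\in L^1(\Qpn)$, I may pass to the limit and get $\langle\fA\psi,\phi\rangle=\langle\psi,W\phi\rangle=\langle W\psi,\phi\rangle$, i.e. $\fA\psi=W\psi$ in $\cD^\pr(\Qpn)$.

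With this in hand I would write $\psi=\psi_N+(\psi-\psi_N)$, where $\psi_N=\psi\,\theta_N$ is the restriction of $\psi$ to $B_N$ extended by zero and $\psi-\psi_N$ is supported in $\Qpn\backslash B_N$; both summands lie in $L^1(\Qpn)$, so by linearity of $W$ on $\cD^\pr(\Qpn)$,
\[
\fA\psi=W\psi=W\psi_N+W(\psi-\psi_N)\qquad\text{in }\cD^\pr(\Qpn).
\]
Restricting to $B_N$, the first term gives, by the very definition of $W_N$ in the distributional sense, $\big(W\psi_N\big)\!\!\upharpoonright_{B_N}=W_N\psi_N$. For the second term, $\psi-\psi_N\in L^1(\Qpn)$ with support in $\Qpn\backslash B_N$, so Lemma \ref{5*} applies and yields $\big(W(\psi-\psi_N)\big)\!\!\upharpoonright_{B_N}=R_N(\psi-\psi_N)=R_N$, a finite constant (since $w$ is bounded away from zero on $\Qpn\backslash B_N$ and $\psi\in L^1$). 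Adding the two contributions gives $\big(\fA\psi\big)\!\!\upharpoonright_{B_N}=W_N\psi_N+R_N$, which is \eqref{5-25-1}.

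I expect the main obstacle to be the first step, the distributional identity $\fA\psi=W\psi$ for arbitrary $\psi\in Dom(\fA)$. The difficulty is that $\cD(\Qpn)$ is not invariant under $T(t)$, because the kernel $Z_t$ has noncompact support (it only decays polynomially, cf. \eqref{Zt3}); hence the usual invariance argument cannot be used to show that $\cD(\Qpn)$ is a core for $\fA$. Transposing the difference quotient onto the test function and exploiting the evenness of $Z$ together with the $L^\infty$-convergence of $t^{-1}(T(t)\phi-\phi)$ circumvents this issue and is the technical heart of the proof. Once the identity is available, the splitting and the application of Lemma \ref{5*} are routine.
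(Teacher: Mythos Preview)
Your proof is correct and follows the same strategy as the paper: split $\psi=\psi_N+(\psi-\psi_N)$ and apply Lemma~\ref{5*} to the off-ball piece. The paper's own proof is a one-line invocation of this splitting; your additional justification that $\fA\psi=W\psi$ in $\cD'(\Qpn)$, obtained by transposing the difference quotient onto test functions via the evenness of $Z$ and the $L^\infty$-convergence of $t^{-1}(T(t)\phi-\phi)$, makes explicit a step the paper leaves unspoken.
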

\begin{proof} To prove this statement we remark that function $\psi \in Dom\, (\fA)$ may be represent as $\psi = \psi_N+(\psi-\psi_N)$ and by Lemma \ref{5*}  on $B_N$ we may write $
\fA\psi = W_N\psi_N + R_N, \ \text{where}\ R_N = R_N (\psi - \psi_N).$
\end{proof}
\begin{theorem} If $\psi \in Dom \,(\fA)$ in $L^1(\Qpn)$ (Definition \ref{def:3-2}), then the restriction $\psi_N$ of the function $\psi$ to $B_N$ belongs to $Dom \,(\fA_N)$ and
\begin{equation}\Label{Th5-5}\
\fA_N\psi_N=\big(  W_N-\varkappa \la_N\big) \psi_N,
\end{equation}
where $W_N \psi_N$ is understood in the sense of $\cD^\pr(B_N)$, that is $\psi_N$ is extended by zero to a function on $\Qpn$, $W_N$ is applied to it in the distribution sense, and the resulting distribution is restricted to $B_N$.
\end{theorem}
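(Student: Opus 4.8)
The plan is to prove \eqref{Th5-5} by computing the limit of the difference quotient $\tfrac1t\big(T_N(t)\psi_N-\psi_N\big)$ directly in $L^1(B_N)$; the existence of this limit shows at once that $\psi_N\in Dom(\fA_N)$ and identifies $\fA_N\psi_N$. The entry point is the explicit link between the ball kernel and the whole-space kernel from Theorem \ref{ZN}: identifying $\psi_N$ with its extension by zero and using \eqref{Zr}, \eqref{ct}, for $x\in B_N$ one has
\begin{equation*}
\big(T_N(t)\psi_N\big)(x)=e^{\varkappa\la_N t}\big(T(t)\psi_N\big)(x)+c(t)\int_{B_N}\psi_N\,d^ny,
\end{equation*}
whence
\begin{equation*}
\frac1t\big(T_N(t)\psi_N-\psi_N\big)=\frac1t\big(T(t)\psi_N-\psi_N\big)\big|_{B_N}+\frac{e^{\varkappa\la_N t}-1}{t}\,\big(T(t)\psi_N\big)\big|_{B_N}+\frac{c(t)}{t}\int_{B_N}\psi_N\,d^ny.
\end{equation*}
Of these three terms the last tends to $0$ because $c'(0)=0$ (Lemma \ref{cpr}), while the middle one converges, by strong continuity of $T(t)$ (Lemma \ref{l3-1}) and $\tfrac{d}{dt}e^{\varkappa\la_N t}\big|_{0}=\varkappa\la_N$, to a multiple of $\la_N\psi_N$; the whole problem therefore reduces to the first term.

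For the first term I would decompose $\psi=\psi_N+\psi'$ with $\psi'=\psi\cdot 1_{\Qpn\backslash B_N}$, so that
\begin{equation*}
\frac1t\big(T(t)\psi_N-\psi_N\big)=\frac1t\big(T(t)\psi-\psi\big)-\frac1t\big(T(t)\psi'-\psi'\big),
\end{equation*}
and restrict to $B_N$. Since $\psi\in Dom(\fA)$, the first quotient converges in $L^1(\Qpn)$ to $\fA\psi$; restriction to $B_N$ being continuous, its limit on $B_N$ is $(\fA\psi)\big|_{B_N}=W_N\psi_N+R_N$ by Proposition \ref{prop5-25-1}, \eqref{5-25-1}. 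Because $\psi'$ vanishes on $B_N$, the second quotient restricted to $B_N$ is simply $\tfrac1t\big(T(t)\psi'\big)\big|_{B_N}$, and the key claim is that it tends to the very same constant $R_N=R_N(\psi')$ of Lemma \ref{5*}, \eqref{Rr}. Note that $\psi'$ need not lie in $Dom(\fA)$, so this cannot be read off the generator and must be extracted from the short-time behaviour of $Z$ itself. Granting the claim, the two copies of $R_N$ cancel, the first term tends to $W_N\psi_N$, and assembling the three limits gives \eqref{Th5-5} with $\psi_N\in Dom(\fA_N)$.

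The step that carries the real analytic weight — and which I expect to be the main obstacle — is the identity $\tfrac1t\big(T(t)\psi'\big)\big|_{B_N}\to R_N(\psi')$. For $x\in B_N$ and $y\in\Qpn\backslash B_N$ one has $\Vert x-y\Vert_p=\Vert y\Vert_p$, so $\big(T(t)\psi'\big)(x)=\int_{\Qpn\backslash B_N}Z(t,x-y)\,\psi'(y)\,d^ny$, and the claim is precisely that the rescaled kernel $\tfrac1t Z(t,\cdot)$ recovers the L\'evy intensity $\varkappa/w(\Vert\cdot\Vert_p)$ of the process $\xi_t$ recorded in \eqref{pi}. I would first establish the pointwise limit $\tfrac1t Z(t,z)\to\varkappa/w(\Vert z\Vert_p)$ for $z\neq0$ from the explicit series \eqref{Zt7} (equivalently, by differentiating the kernel at $t=0$), and then upgrade it to convergence in $L^1(B_N)$ by dominated convergence: the bound \eqref{Zt3} yields $\tfrac1t Z(t,z)\le C\Vert z\Vert_p^{-\al}$, which on $\{\Vert y\Vert_p>p^N\}$ is a fixed majorant integrable against $\psi'\in L^1$, the relevant finiteness being that behind \eqref{la-1} and condition (iii). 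It is exactly this identification of the short-time asymptotics of $Z$ with the jump kernel that forces the exterior contribution to reduce to the constant $R_N$ of Lemma \ref{5*} and to cancel the boundary term produced by Proposition \ref{prop5-25-1}; everything else is routine bookkeeping with \eqref{Zr}, \eqref{ct} and the already-established facts $c'(0)=0$ and strong continuity.
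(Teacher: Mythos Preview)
Your proposal is correct and follows essentially the same route as the paper: both use the explicit formula $Z_N=e^{\varkappa\la_N t}Z+c(t)$ from Theorem~\ref{ZN}, dispose of the $c(t)$-contribution via $c(0)=c'(0)=0$ (Lemma~\ref{cpr}), extract the $\varkappa\la_N\psi_N$ term by strong continuity, and then handle the remaining piece by the splitting $\psi=\psi_N+\psi'$ together with Proposition~\ref{prop5-25-1} and the short-time asymptotic $\tfrac1t Z(t,y)\to\varkappa/w(\Vert y\Vert_p)$ obtained from the series \eqref{Zt7}. Your explicit appeal to dominated convergence via the uniform bound $\tfrac1t Z(t,y)\le C\Vert y\Vert_p^{-\al}$ from \eqref{Zt3} is in fact a cleaner justification of the passage to the limit in the $\psi'$-integral than the paper's own argument, which derives the pointwise expansion \eqref{Ztare} in detail but leaves the integration step implicit.
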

\begin{proof}
For $\psi \in Dom \,(\fA)$ we have to check that:
\begin{itemize}
\item[1)] $W_N\psi_N\in L^1(B_N)$;
\item[2)] $\Big\Vert -\dfrac{1}{t}\big[T_N(t)\psi_N -\psi_N\big]-\big(W_N-\varkappa \la_N\big)\psi_N\Big\Vert_{L^1(B_N)}\to 0$, as $t\to 0+$.
\end{itemize}
To prove the first statement we remark that for function $\psi \in Dom\, (\fA)$ Proposition \ref{prop5-25-1} implies that $\fA\psi = W_N\psi_N + R_N$ thus $W_N\psi_N \in L^1(B_N)$.

Further, from \eqref{Zr}, expanding exponent in the Taylor series, we have
\begin{align}\Label{5-25}\
\big(T_N(t) \psi_N\big)(x)&=\int_{B_N}Z(t,x-y)\psi(y)\, d^n y+c(t)\int_{B_N}\psi(y)\, d^ny +\\
\nonumber
&+\varkappa \,\la_N t \int_{B_N} Z(t,x-y)\psi(y)\, d^n y+d(t)\int_{B_N} Z(t,x-y)\psi(y)\,d^n y,
\end{align}
where $d(t)=O(t^2)$, $t\to 0$. By strong continuity of $T_N(t)$ in $L^1(B_N)$ (see Lemma \ref{l5-4}) we have
\[\Bigg\Vert -\varkappa \la_N \int\limits_{B_N}Z(t,x-y)\psi(y)\, d^n y +\varkappa\la_N \,\psi_N(x)\Bigg\Vert_{L^1(B_N)}\to 0,\ \ \text{\rm as}\ \ t\to 0.\]
Moreover from Young inequality it follows that
\[\dfrac{1}{t}\Bigg\Vert d(t)\int\limits_{B_N}Z(t,x-y)\psi(y)\,d^ny\,\Bigg\Vert_{L^1(B_N)}\to 0,\ \ \text{\rm as}\ \ t\to 0.\]

From \eqref{gtt0} it follows that $c(0)=0$. Moreover Lemma \ref{cpr}  implies that $c^\pr(0)=0$, thus $c(t)=O(t^2)$ as $t\to 0$ and second term in \eqref{5-25} is negligible. Therefore it remains to consider the first term in \eqref{5-25}, that is
\[V(t,x)=\int_{B_N}Z(t, x-y)\,\psi(y)\, d^n y = v_1(t,x)-v_2(t,x),\ \ x\in B_r,\]
where
\begin{align*}
&v_1(t,x)=\int_{\Qpn}Z(t,x-y)\,\psi(y)\,d^ny,\\
&v_2(t,x)=\int\limits_{\Vert y\Vert_p >p^N}Z(t,x-y)\,\psi(y)\,d^ny.
\end{align*}

If we show that
\begin{equation}\Label{v2}\
\dfrac{1}{t} v_2(t,x) - R_N \to 0,\ \ \text{when}\ \ t\to 0
\end{equation}
then this ends the proof. Indeed, taking above argument and \eqref{5-25-1} in account we may write:
\begin{align*}
&\Big\Vert -\dfrac{1}{t}\big[T_N(t)\psi_N -\psi_N\big]-\big(W_N-\varkappa \la_N\big)\psi_N\Big\Vert_{L^1(B_N)}\leq\\
&\leq \Big \Vert -\dfrac{1}{t} \Big( \int_{\Qpn}Z(t,x-y)\, \psi (y)\, d^ny - v_2(t,x)-\psi\Big)-W_N\psi_N \Big\Vert_{L^1(B_N)} + o(1) =\\
&= \Big \Vert -\dfrac{1}{t} \Big( \int_{\Qpn}Z(t,x-y)\, \psi(y)\, d^ny -\psi\Big)-(W_N\psi_N +R_N) \Big\Vert_{L^1(B_N)} + o(1) =\\
&= \Big \Vert -\dfrac{1}{t} \big( T(t)\psi -\psi\big)-W\psi \Big\Vert_{L^1(B_N)} + o(1) \to 0,\ \ \text{as}\ \ t\to 0,
\end{align*}
which completes the proof.

Let us show \eqref{v2}. To do this we write
\begin{equation}\Label{5-28}\
\dfrac{1}{t}v_2(t,x)-R_N =\int_{\Vert y\Vert_p >p^N}\Big(\dfrac{1}{t}Z(t,y)-\dfrac{\are}{w(\Vert y\Vert_p)}\Big)\psi(y) \,d^ny,
\end{equation}
where $\psi\in Dom\, (\fA)$ in $L^1(\Qpn)$.
If we show that
\begin{equation}\Label{Ztare}\
Z(t,y) = \dfrac{\are t}{w(\Vert y\Vert_p)} +o(t^2),
\end{equation}
then this will prove \eqref{v2}.

Due to \eqref{Zt7} for $\Vert y\Vert_p=p^\be$ we have
\[Z(t,y) =\Vert y\Vert_p^{-n}\Bigg[(1-p^{-n})\sum\limits_{j=0}^\infty p^{-nj}e^{-\are t A_w(p^{-(\be+j)})}-e^{-\are tA_w(p^{-(\be -1)})}\Bigg].\]
Then expanding the exponents in the Taylor series we have
\begin{align*}
&e^{-\are t A_w(p^{-(\be+j)})}=1-\are t A_w(p^{-(\be+j)})+r_1(t);\\
&e^{-\are t A_w(p^{-(\be-1)})}=1-\are t A_w(p^{-(\be-1)})+r_2(t),
\end{align*}
where, for example, $r_2(t)$ is the result of decomposition of function $f(t)=e^{-\are tA_w(\Vert y\Vert_p^{-1} p)}$:
\begin{align*}
&r_2 (t)= \dfrac{t^2f^{\pr\pr}(\theta t)}{2}= \big[A_w(\Vert y\Vert_p^{-1}p)\big]^2e^{-\are \theta t A_w(\Vert y\Vert_p^{-1} p)}, \ \theta \in (0,1).
\end{align*}
Thus, due to \eqref{Aw1}
\[\vert r_2(t)\leq C\,t^2\big[\Vert y\Vert^{-1}_pp\big]^{2(\al - n)},\ \ \text{for}\ \Vert y\Vert_p=p^\be.\]
Therefore $r_2=o(t^2)$ as $t\to 0$ and similar $r_1(t)$, thus
\begin{align*}
Z(t,y)&=\Vert y\Vert_p^{-n}\Bigg[(1-p^{-n})\sum\limits_{j=0}^\infty p^{-nj}\big(1-\are t A_w(p^{-(\be+j)})\big)-1+\are t A_w(p^{-(\be-1)})\Bigg]+ o(t^2)=\\
&=\Vert y\Vert_p^{-n}\are\,t\Big[A_w(p^{-(\be-1)})-(1-p^{-n})\sum\limits_{j=0}^\infty p^{-nj} A_w(p^{-(\be+j)})\Big]+ o(t^2).
\end{align*}
Using representation \eqref{eq:Awrep-1} we may write
\begin{align}\Label{5-30}\
Z(t,y)&=\Vert y\Vert_p^{-n}\are\,t\Big[A+B-C-D\Big]+ o(t^2),\\
\intertext{where}
\nonumber
A&=(1-p^{-n})\sum\limits_{k=\be+1}^\infty\dfrac{p^{nk}}{w(p^{\,k})};\quad B=\dfrac{p^{n\be}}{w(p^\be)};\\
\nonumber
C&= (1-p^{-n})\sum\limits_{j=0}^\infty p^{-nj}(1-p^{-n}) \sum\limits_{k=\be+j+2}\dfrac{p^{nk}}{w(p^{\,k})};\\
\nonumber
D&= (1-p^{-n})\sum\limits_{j=0}^\infty p^{-nj}\dfrac{p^{n(\be+j+1)}}{w(p^{\be+j+1})}.
\end{align}
Let us in the term $C$ change the order of summation and calculate the finite sum of geometric progression, then we have
\begin{align*}
C&=(1-p^{-n})^2\sum\limits_{k=\be+2}^\infty\sum\limits_{j=0}^{k-\be-2}p^{-nj}\dfrac{p^{nk}}{w(p^{\,k})}=(1-p^{-n})\sum\limits_{k=\be+2}^\infty\dfrac{p^{nk}}{w(p^{\,k})}\big(1-p^{-n(k-\be-1)}\big)=\\
&=(1-p^{-n})\sum\limits_{k=\be+2}^\infty\dfrac{p^{nk}}{w(p^{\,k})}-(1-p^{-n})p^{n(\be+1)}\sum\limits_{k=\be+2}^\infty\dfrac{1}{w(p^{\, k})}=C_1-C_2;\\
D&=(1-p^n)p^{n(\be+1)}\sum\limits_{j=0}^\infty\dfrac{1}{w(p^{\be+j+1})}=(1-p^n)p^{n(\be+1)}\sum\limits_{k=\be+1}^\infty\dfrac{1}{w(p^{\, k})}.
\end{align*}
Looking at \eqref{5-30} we see that
\begin{align*}
&A-C_1 = (1-p^{-n})\dfrac{p^{n(\be+1)}}{w(p^{\,\be+1})};\\
&C_2-D = -(1-p^{-n})\dfrac{p^{n(\be+1)}}{w(p^{\,\be+1})},
\end{align*}
which contracts each other and it remains the term $B$, i.e.
\[Z(t,y)=\Vert y\Vert_p^{-n}\are\,t\dfrac{p^{n\be}}{w(p^{\be})}+ o(t^2)=\dfrac{\are t}{w(\Vert y\Vert_p)}+ o(t^2),\ \text{for}\ \Vert y\Vert_p=p^\be,\]
which proves \eqref{Ztare} and therefore \eqref{v2}.
\end{proof}
\section{Main result}

To formulate the main result let us first recall the notation of mild solution of nonlinear equation in some real Banach space $X$.

\subsection{Mild solution}

Consider the Cauchy problem
\begin{equation}\Label{4-1Barbu}
\left\{
\begin{array}{lc}
D_tu+Au = f(t),&t\in[0,T];\\
u(0)=u_0,&
\end{array}
\right.
\end{equation}
where $u_0\in X$ and $f\in L^1([0,T]; X)$, $A$ is a $m$-accretive nonlinear operator.

Operator $A\colon X\to  X$ is called \bfi{accretive} if for every pair $x,y\in Dom\, (A)$
\[\langle Ax-Ay, w\rangle \geq 0,\]
where $w\in J(x-y)$ and $J\colon X\to X^*$ is the duality mapping of the space $X$. Cor\-res\-pon\-dingly operator $A$ is called \bfi{$m$-accretive} if the range $Ran\,(I+A)=X$.
\begin{definition} Let $f\in L^1([0;T];X)$ and $\vep > 0$ be given. An \bfi{$\vep$-discretization} on $[0;T]$ of the equation $D_t y +Ay = f$ consists of a partition $0 = t_0\leq t_1 \leq t_2 \leq \ldots\leq t_N$ of the
interval $[0; t_N]$ and a finite sequence $\{f_i\}_{i=1}^N\subset X$
such that $
t_i-t_{i-1}<\vep\ \ \text{for}\ \ i=1,\ldots, N,\ \ T-\vep < t_N \leq T
$ and
\[\sum\limits_{i=1}^N \,\int\limits_{t_{i-1}}^{t_i}\Vert f(s)-f_i\Vert \, ds <\vep.\]
\end{definition}
\begin{definition} A piecewise constant function $z\col [0,t_N]\to X$ whose values $z_i$ on $(t_{i-1}, t_i]$ satisfy the finite difference equation
$$
\dfrac{z_i-z_{i-1}}{t_i-t_{i-1}}+Az_i=f_i,\ \ \ i=1,\ldots, N
$$
is called an \bfi{$\vep-$approximate solution} to the Cauchy problem \eqref{4-1Barbu} if it satisfies
$$
\Vert z(0)-u_0\Vert \leq \vep.
$$
\end{definition}
\begin{definition}
\bfi{A mild solution} of  the Cauchy problem \eqref{4-1Barbu} is a function $u\in C([0,T]; X)$ with the property that for each $\vep >0$ there us an \bfi{$\vep-$approximate solution} $z$ of $D_tu +Au = f$ on $[0,T]$ such that $\Vert u(t)- z(t)\Vert \leq \vep $ for al $t\in [0,T]$ and $u(0)=u_0$.
\end{definition}
See \cite[Ch. 4]{Barbu:book} for the details.

\subsection{Solvability of the nonlinear equation}

Let us consider in $L^1(\Qpn)$ the equation
\begin{equation}\Label{6-1}\
D_t u+\fA \big(\vph(u)\big)=0,\ \ u=u(t,x),\ t>0,\ x\in\Qpn,
\end{equation}
where $\fA$ is the generator of the semigroup $T(t)$ in $L^1(\Qpn)$ and $\vph\col \RR\to \RR$ is a continuous strictly increasing function, $\vph(0)=0$, such that:
$$
\vert \vph (s)\vert \leq C\, \vert s\vert^m,\ \ m\geq 1.
$$
Consider the nonlinear operator $\fA\vph$ with the domain
$$
Dom\, (\fA\vph) =\{u\in L^1(\OO)\col \vph(u)\in  Dom\,(\fA)\}.
$$
From Lemma \ref{lem3-2} it follows that the operator $\fA \vph$ is densely defined and therefore so the operator $\w{\fA \vph}$ has the same property.

\begin{theorem} The operator $\overline{\fA \vph}$ is $m$-accretive, i.e. for any initial function $u_0\in L^1(\Qpn)$ the Cauchy problem for equation \eqref{6-1} has a unique mild solution.
\end{theorem}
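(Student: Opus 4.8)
The plan is to verify the two conditions that, by the B\'enilan--Crandall theory used in \cite{BrSt:1973,CrP} (see also \cite[Ch.~4]{Barbu:book}), characterise $m$-accretivity and thereby produce a unique mild solution: namely (a) that $A:=\overline{\fA\vph}$ is accretive in $L^1(\Qpn)$, and (b) the range condition $\mathrm{Ran}(I+\la\,\overline{\fA\vph})=L^1(\Qpn)$ for every $\la>0$. Once (a) and (b) hold, accretivity furnishes uniqueness and the $L^1$-contraction of the nonlinear resolvents, while the range condition makes the implicit Euler scheme in the definition of a mild solution solvable; the closure is taken precisely so that (b) can be met on all of $L^1(\Qpn)$.

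For accretivity (a) I would use the $L^1$ bracket characterisation: with $A=\overline{\fA\vph}$, accretivity is equivalent to $\|u-v\|_{L^1}\le\|u-v+\la(Au-Av)\|_{L^1}$ for all $\la>0$ and all $u,v$ in the domain. The decisive structural fact is that $T(t)$ is a \emph{Markov} semigroup: by \eqref{Z0} and \eqref{Zt5} its kernel satisfies $Z\ge0$ and $\int_{\Qpn}Z(t,x)\,d^nx=1$, so each $T(t)$ is a positivity-preserving $L^1$-contraction. Hence $\fA$ is dispersive, i.e. the sign inequality $\int_{\Qpn}\mathrm{sign}_0(g)\,\fA g\,d^nx\le0$ holds for $g\in Dom(\fA)$, obtained from $\int_{\Qpn}\mathrm{sign}_0(g)\,T(t)g\,d^nx\le\|T(t)g\|_{L^1}\le\|g\|_{L^1}$ by dividing by $t$ and letting $t\downarrow0$. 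Since $\vph$ is continuous and strictly increasing with $\vph(0)=0$, one has $\mathrm{sign}_0(u-v)=\mathrm{sign}_0(\vph(u)-\vph(v))$; applying the sign inequality with $g=\vph(u)-\vph(v)$ then controls the bracket and establishes accretivity, the passage to the limit being made rigorous through a smooth convex approximation $j_\varepsilon$ of $|\cdot|$ together with the Kato-type inequality for $\fA$.

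The heart of the argument, and the main obstacle, is the range condition (b): given $f\in L^1(\Qpn)$ and $\la>0$, solve the nonlinear resolvent equation $u+\la\,\fA\vph(u)=f$. I would substitute $w=\vph(u)$, $u=\beta(w)$ with $\beta=\vph^{-1}$ a continuous increasing function, $\beta(0)=0$, reducing the problem to the degenerate elliptic equation $\beta(w)+\la\,\fA w=f$. The difficulty is that this equation is genuinely degenerate (porous-medium type, $\beta$ steep near the origin) and lives in the non-reflexive space $L^1(\Qpn)$, so direct weak-compactness arguments are unavailable. My plan is to exploit the ball machinery of Sections~3--5: on each $B_N$ the restricted operator $\fA_N$ generates the strongly continuous $L^1(B_N)$-contraction semigroup $T_N(t)$ (Lemma~\ref{l5-4}) with the explicit structure $\fA_N\psi_N=(W_N-\are\la_N)\psi_N$, and $B_N$ has finite Haar measure, so that $L^\infty(B_N)\subset L^2(B_N)\subset L^1(B_N)$ and coercive monotone-operator methods apply. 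I would first solve the approximate problems $\beta(w_N)+\la\,\fA_N w_N=f_N$ on $B_N$, using monotonicity of $\beta$ together with the $m$-accretivity of the linear $\fA_N$ (and a Yosida regularisation $\beta_\varepsilon$ where $\beta$ degenerates), obtaining $u_N=\beta(w_N)$.

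The final step is to let $N\to\infty$. Here the compatibility relation of Proposition~\ref{prop5-25-1}, $\fA\psi=W_N\psi_N+R_N$ with the correction $R_N=R_N(\psi-\psi_N)$ from Lemma~\ref{5*}, guarantees that the ball problems are consistent truncations of the whole-space equation, the $R_N$-terms becoming negligible as the supports exhaust $\Qpn$. Uniform $L^1$ bounds coming from the contraction estimates of step (a), together with the order-preserving structure, should provide the $L^1$-Cauchy property in $N$ needed to pass to the limit and produce $u\in L^1(\Qpn)$ with $u+\la\,\overline{\fA\vph}(u)=f$, establishing $\mathrm{Ran}(I+\la\,\overline{\fA\vph})=L^1(\Qpn)$. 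The main technical obstacle throughout is controlling this degenerate limit in $L^1$ without reflexivity: securing the uniform estimates and the distributional convergence $\fA_N w_N\to\fA w$ while keeping $w=\vph(u)\in Dom(\fA)$, which is exactly where the ball-to-space analysis and the small-time expansions of $Z_N$, $c(t)$ and $Z(t,y)$ developed earlier enter decisively.
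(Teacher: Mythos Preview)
Your accretivity argument (a) is fine and matches what the paper does: the paper simply cites Proposition~1 of \cite{CrP}, whose content is exactly the Kato/sign inequality you sketch, made available by the Markov property \eqref{Z0}--\eqref{Zt5} of $T(t)$.

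For the range condition (b), however, your route diverges from the paper's and has a genuine gap. You propose to solve $\beta(w_N)+\la\,\fA_N w_N=f_N$ on each ball $B_N$ and then send $N\to\infty$, hoping that ``uniform $L^1$ bounds $\ldots$ together with the order-preserving structure'' yield an $L^1$-Cauchy sequence. But you never supply a mechanism for this: uniform $L^1$ bounds in a non-reflexive space give no compactness, no comparison principle between the $u_N$ is established (indeed $\fA_N$ differs from $\fA\!\upharpoonright_{B_N}$ by $\are\la_N I$ \emph{and} by the boundary constant $R_N$, so monotonicity in $N$ is not automatic), and the distributional convergence $\fA_N w_N\to\fA w$ that you flag as ``the main technical obstacle'' is never addressed. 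As stated, the limit $N\to\infty$ is unjustified.

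The paper avoids this by regularising in a different parameter and using the ball machinery for a different purpose. It works on the whole space from the start: by \cite{CrP} the operator $(\vep I+\fA)\vph$ is $m$-accretive, so $\vep v_\vep+\fA v_\vep+\beta(v_\vep)=f$ has a unique solution for every $\vep>0$ and every $f\in L^1\cap L^\infty$ (Proposition~4 of \cite{BrSt:1973}), with $\|\beta(v_\vep)\|_{L^\infty}\le\|f\|_{L^\infty}$ and $\|w_\vep\|_{L^1}\le\|f\|_{L^1}$ for $w_\vep=f-\fA v_\vep$. The decisive step you are missing is compactness in $\vep$: since $\fA$ is \emph{translation invariant}, \eqref{2-18K} applied with $\w f=f(\cdot+h)$ gives the equicontinuity estimate $\int|w_\vep(x+h)-w_\vep(x)|\,d^nx\le\int|f(x+h)-f(x)|\,d^nx$, and the Kolmogorov--Riesz criterion on the compact group $B_N$ then makes $\{\fA v_\vep\}$ relatively compact in $L^1(B_N)$ for every $N$. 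At this point the ball analysis enters, but not to solve approximate problems: by Proposition~\ref{prop5-25-1} one has $W_N v_{\vep,N}=\fA v_\vep-R_N(v_\vep)$ with $R_N(v_\vep)$ uniformly bounded by \eqref{5-2}, so $\{W_N v_{\vep,N}\}$ is relatively compact in $L^1(B_N)$; since $W_N=\fA_N+\are\la_N$ has bounded inverse on $L^1(B_N)$ (Hille--Yosida for the generator $\fA_N$ of $T_N(t)$), this transfers to relative compactness of $\{v_{\vep,N}\}$ itself. One then passes to the limit $\vep\to0$ along a subsequence to obtain $v$ with $\fA v+\beta(v)=f$, which gives dense range of $I+\fA\vph$ and hence $m$-accretivity of the closure. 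In short: the paper regularises in $\vep$, obtains compactness from translation invariance, and uses the invertibility of $W_N$ to pull compactness back from $\fA v_\vep$ to $v_\vep$; your proposal regularises in $N$ and lacks any compactness device for the limit.
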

\begin{proof} The statement of the theorem is a consequence of the Crandall-Liggett theorem \cite[Theorem 4.3]{Barbu:book}. Indeed,
from Proposition 1 in \cite{CrP} it follows that $(\fA\vph)(u)=\fA(\vph(u))$ is an accretive nonlinear operator in $L^1(\Qpn)$ and for any $\vep >0$ operator $(\vep I+\fA)\vph$ is $m$-accretive in $L^1(\Qpn)$.

Therefore in order to prove the $m$-accretivity of $\overline{\fA \vph}$ it is sufficient to prove that the operator $I+\fA\vph$ has a dense range in $L^1(\Qpn)$. In other words it suffices to prove that equation
\[u+\fA\vph(u)=f\]
is solvable for a dense subset of functions $f\in L^1(\Qpn)$.  Equivalently, setting  $\be = \vph^{-1}$ (the function inverse to $\vph$), we have to study the equation
\begin{equation}\Label{v}\
\fA v+\be(v)=f.
\end{equation}
Since the space of test functions $\cD(\Qpn)$ is dense in $L^1(\Qpn)$, therefore it is enough to prove the solvability of equation \eqref{v} for any $f\in L^1(\Qpn)\cap L^\infty(\Qpn)$.

For such a function $f$ we consider the regularized equation to \eqref{v}:
\begin{equation}\Label{ve}\
\vep v_\vep +\fA v_\vep+\be(v_\vep)=f, \quad \vep >0,
\end{equation}
possessing, due to Proposition 4 in \cite[p.\,571]{BrSt:1973} a unique solution $v_\vep$, such that $w_\vep = f-\fA v_\vep$ satisfies the inequality:
\begin{equation}\Label{2-17K}\
\Vert w_\vep\Vert_{L^1(\Qpn)}\leq \Vert f\Vert_{L^1(\Qpn)}.
\end{equation}
Moreover, if $\w{v}_\vep$ and $\w{w}_\vep$ correspond to equation \eqref{ve} with r.h.s. $f$ and $\w{f}$, then
\begin{equation}\Label{2-18K}\
\Vert w_{\vep}-\w{w}_\vep\Vert_{L^1(\Qpn)}\leq\Vert f - \w f\,\Vert_{L^1(\Qpn)}.
\end{equation}
In addition, if $f\in L^1(\Qpn)\cap L^\infty(\Qpn)$, then due to the Proposition 4 in \cite{BrSt:1973} applied in space $L^q(\Qpn)=L^\infty(\Qpn)$ we have
\begin{equation}\Label{2-19K}\
\Vert f - (\fA+\vep)v_\vep\Vert_{L^\infty(\Qpn)}=\Vert \be(v_\vep)\Vert_{L^\infty(\Qpn)}\leq \Vert f\Vert_{L^\infty(\Qpn)}.
\end{equation}
Using inequality \eqref{2-19K} we find that
\begin{equation}\Label{5-2}\
\vert v_\vep(x)\vert \leq \be^{-1}(\Vert f\Vert_{L^\infty(\Qpn)})
\end{equation}
for almost all $x\in\Qpn$. This means that for any fixed $N$ the constant $R_N(v_\vep)$ from \eqref{Rr} satisfy inequality:
\[R_N(v_\vep)\leq C,\]
where $C$ does not depend on $\vep$, so that the set of constant functions $\{R_N(v_\vep), 0 < \vep < 1\}$ is relatively compact in $L^1(B_N)$.

On the other hand, it follows from \eqref{2-17K}, \eqref{2-18K} and the translation invariance of $\fA$ that the family of functions $w_\vep = f-\fA v_\vep$ satisfies the inequalities:
\begin{align}\Label{5-3K}\
&\Vert w_\vep\Vert_{L^1(\Qpn)}\leq \Vert f\Vert_{L^1(\Qpn)};\\
\Label{5-4K}\
&\int\limits_{\Qpn}\vert w_\vep(x+h)-w_\vep(x)\vert\, d^nx\leq\int\limits_{\Qpn}\vert f(x+h) -f(x)\vert\, d^nx
\end{align}
for any $h\in\Qpn$.
The conditions \eqref{5-3K} and \eqref{5-4K} imply relative compactness of sequence $\{w_\vep\}$ and therefore of $\{\fA v_\vep\}$ in $L^1_{\text{loc}}(\Qpn)$, that is the compactness of the closure of the restriction $(\fA v_\vep)\big\vert_X$ for any bounded measurable subset $X\subset \Qpn$. This is a consequence of the criterion for relative compactness in $L^1(G)$ where $G$ is a compact group (see Theorem 4.20.1 in \cite{Edwards}) applied to the case $G=B_N$ (the additive group of $p$-adic ball).

Denote by $v_{\vep,N}$ the restriction of $v_\vep$ to $B_N$. From Proposition \ref{prop5-25-1} it follows that 
\[W_N\psi_N = \fA\psi - R_N\]
therefore the set $W_N v_{\vep,N}$ is  relatively compact in $L^1(B_N)$. Since $W_N =\fA_N +\are \la_N$, defined as in \eqref{Th5-5}, due to Hille-Yosida theorem has bounded inverse on $L^1_{loc}(\Qpn)$, this implies the relative compactness of $\{v_{\vep,N}\}$ in $L^1(B_N)$ for each $N$. The same is true for $\{v_\vep\}$ in $L^1_{loc}(\Qpn)$. Let $v$ be its limit point. Together with the relative compactness of $\{\fA v_\vep\}$ the above reasoning proves the solvability of \eqref{v} because by Fatou's lemma and \eqref{5-3K}, a limit point of $\{\fA v_\vep\}$ belongs to $L^1(\Qpn)$. Therefore $\be(v)\in L^1(\Qpn)$. By \eqref{5-2}, $v\in L^\infty (\Qpn)$, so that $\be(v)\in L^\infty(\Qpn)$, $v=\vph(\be(v))$, $\vert v(x)\vert \leq C\, \vert \be (v)\vert$, and $v$ belongs to $L^1(\Qpn).$

\end{proof}

\section*{Acknowledgments}
The work by the first- and third-named authors was funded in part under the budget program of Ukraine No. 6541230 ``Support to the development of priority research trends''. The third-named author was also supported in part in the framework of the research work "Markov evolutions in real and p-adic spaces" of the Dragomanov National Pedagogical  University of Ukraine.

\end{document}